\newcommand{\supp}{\textup{supp}}
\newcommand{\D}{\mathbb{D}}
\newcommand{\C}{\mathbb{C}}
\newcommand{\p}{\mathbb{P}}
\newcommand{\pe}{\textup{ := }}
\newcommand{\rat}{\textup{Rat}}
\newcommand{\Per}{\textup{Per}}
\newcommand{\bif}{\textup{bif}}
\newcommand{\Mand}{\textup{\textbf{M}}}
\begin{document}

%%%% debut macro\`u  %%%%
\def\theequation{\thesection.\arabic{equation}}
\def\sqw{\hbox{\rlap{\leavevmode\raise.3ex\hbox{$\sqcap$}}$%
\sqcup$}}
\def\sqb{\hbox{\hskip5pt\vrule width4pt height4pt depth1.5pt%
\hskip1pt}}

\newtheorem{defi}{Definition}[section]
\newtheorem{tm}{Theorem}[section]
\newtheorem{prop}[tm]{Proposition}
\newtheorem{nota}{Notation}[section]
\newtheorem{rem}{Remark}[section]
\newtheorem{lm}[tm]{Lemma}
\newtheorem{cor}[tm]{Corollary}
\newtheorem{ctrex}[rem]{Counter-example}
\newtheorem{ex}[rem]{Example}
\newtheorem{apl}[tm]{Application}
\newtheorem{fait}{Fact}
\newtheorem*{TM}{Main Theorem}

\title{On the geometry of bifurcation currents for quadratic rational maps.}
\author{Fran\c{c}ois Berteloot and Thomas Gauthier}

\maketitle
\begin{abstract}
We describe the behaviour at infinity of the bifurcation current in the moduli space of quadratic rational maps.
To this purpose, we extend  it to some closed, positive $(1,1)$-current on a two-dimensional complex projective space and then
compute the Lelong numbers and the self-intersection of the extended current.
\end{abstract}
\section{Introduction.}
For any holomorphic family $\left(f_{\lambda}\right)_{\lambda\in M}$ of degree $d$ rational maps on $\p^1$, the bifurcation locus is the subset of the parameter space $M$
where the Julia sets $J_{\lambda}$ of $f_{\lambda}$ does not move continuously with $\lambda$. In their seminal paper \cite{MSS}, Ma\~{n}\'e, Sad and Sullivan have shown that the bifurcation locus is nowhere dense in $M$ and coincides with the closure of the set of parameters for which $f_{\lambda}$ admits a non-persistent neutral cycle.\\
For the quadratic polynomial family $\left(z^2+\lambda\right)_{\lambda\in \C}$,  the bifurcation locus is the boundary of the Mandelbrot set and, in particular,  is bounded.
The situation is much more complicated for quadratic rational maps. Their moduli space $\mathcal{M}_2$ can be identified with the complement in a complex projective space $\p^2$ of a line at infinity
$\mathbb{L}_\infty$ around which the behaviour of the bifurcation locus is far to be completely understood.\\
The first results in this direction are due to J. Milnor \cite{Milnor3} who studied the curves
\begin{center}
$\Per_n(w)\pe\{[f]\in\mathcal{M}_2$ s.t. $f$ has a $n$-cycle of multiplier $w\}$
\end{center}
in the projective space $\p^2$ (see Proposition \ref{propMil}). Sharper results have then been obtained by A. Epstein \cite{epstein} and yields a precise description of the intersections of 
$\Per_n(w)$ with $\mathbb{L}_\infty$ (see Proposition \ref{propEps}). It should be stressed that the deformations of the Mandelbrot set, provided by intersecting the bifurcation locus with the \emph{lines} $Per_1(t)$ for $\vert t\vert <1$ (see Proposition \ref{propMouvHol}), play a central role in these investigations. In particular, the work of C. Petersen
\cite{petersen} on the collapsing of limbs (see the beginning of the fourth section) is crucial in Epstein's proof.\\

Currents not only provide an appropriate framework for studying bifurcations from a measure-theoretic viewpoint, but are also very well suited to investigate
the asymptotic distribution of the hypersurfaces $\Per_n(w)$. Let us recall that L. DeMarco has proved that the bifurcation locus of any
 holomorphic family supports  a closed, positive $(1,1)$-current \cite{DeMarco1}. This current is denoted  $T_\bif$ and called \emph{bifurcation current}. We refer the reader to the survey \cite{DuS} or the lecture notes \cite{Ber} for a report on recent results involving bifurcation currents and further references.\\
The link between the bifurcation current $T_\bif$ and the hypersurfaces $\Per_n(w)$ relies on the fact that the Lyapunov exponent $L(\lambda)$ of $f_\lambda$
(with respect to its maximal entropy measure) is a global potential for $T_\bif$ (see \cite{DeMarco2} or \cite{BB1}). G. Bassanelli and the first author have indeed deduced from this property
the convergence of the weighted integration currents on $\Per_n(w)$
\begin{center}
$\lim_n \frac{1}{d^n}[\Per_n(w)]= T_\bif$
\end{center}
 for $\vert w\vert <1$ or for any $w \in \C$ when the hyperbolic parameters are dense in $M$ (see \cite{BB3}).\\
 
 The present paper is devoted to the study of the behaviour at infinity of the bifurcation current in the moduli space $\mathcal{M}_2$ of quadratic rational maps. We first show that this current extends naturally to a closed, positive $(1,1)$-current $\hat T_\bif$ on $\p^2$. We relate this current with the hypersurfaces $\Per_n(w)$ and precise its support
 (see Theorem \ref{teoExt}). We then use Epstein's result to compute the Lelong numbers of  $\hat T_\bif$ at any point of the line at infinity $\mathbb{L}_\infty$ and get a precise
 description of the measure $\hat T_\bif\wedge [\mathbb{L}_\infty]$ (see Theorem \ref{teoLelN}). We finally describe the support of the so-called bifurcation measure $ T_\bif\wedge  T_\bif$ and compare this measure  with $\hat T_\bif\wedge \hat T_\bif$ (see Propositions \ref{propSupM} and \ref{propComM}).\\
 
 \noindent{\bf Notations.} The complex plane will be denoted $\C$ and the euclidean unit disc in $\C$ will be denoted $\D$. The $k$-dimensional complex projective space is denoted
 $\p^k$. A ball of radius $r$ centered at $x$  
 in some metric space is denoted $B(x,r)$.
 
\section{Preliminaries.}

\subsection{Lelong numbers and currents in $\p^2$.}

\par Let $T$ be a closed, positive, $(1,1)$-current  in $\p^2$. The \emph{mass} of $T$ on an open set $U\subset\p^2$ is given by
\begin{center}
$\|T\|_U\pe\displaystyle\int_UT\wedge\omega$
\end{center}
where $\omega$ is the Fubini-Study form on $\p^2$. When $V$ is an algebraic curve in $\p^2$ and $[V]$ is the integration current on $V$, then $\|[V]\|_{\p^2}=\deg(V)$.
\par For any $x\in\p^2$, the \emph{Lelong number} of $T$ at $x$ is given by
\begin{center}
$\displaystyle\nu(T,x)\pe\lim_{r\rightarrow0}\frac{1}{r^2}\|T\|_{B(x,r)}$.
\end{center}
These numbers somehow measure the singularities of $T$. If $V$ is an algebraic curve in $\p^2$, then $\nu([V],x)$ is the multiplicity of $V$ at $x$. We will mainly use the two following properties of Lelong numbers (see \cite{Demailly} Proposition 5.12 page 160 and Corollary 7.9 page 169). 

\begin{tm}[Demailly]
\begin{enumerate}
\item If $T_n\longrightarrow T$, then for any $x\in\p^2$,
\begin{eqnarray*}
\nu(T,x)\geq\limsup\limits_{n\rightarrow+\infty}\nu(T_n,x).
\end{eqnarray*}
\item If $T_1$ and $T_2$ are such that $T_1\wedge T_2$ is well-defined, then for any $x\in\p^2$,
\begin{eqnarray*}
\nu(T_1\wedge T_2,x)\geq\nu(T_1,x)\cdot\nu(T_2,x).
\end{eqnarray*}
\end{enumerate}
\label{semicont}
\end{tm}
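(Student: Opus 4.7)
My plan treats the two assertions separately, since they rely on distinct classical properties of Lelong numbers.

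For part (1), I would start from the standard monotonicity of the normalized mass: the function $r \mapsto \|T\|_{B(x,r)}/r^2$ is nondecreasing on $(0,\infty)$, and its limit as $r \to 0^+$ is precisely $\nu(T,x)$. In particular, $\nu(T,x) \leq \|T\|_{B(x,r)}/r^2$ for every $r>0$. For any fixed $r>0$ such that $T$ does not charge $\partial B(x,r)$ (only countably many radii are excluded), the weak convergence $T_n \to T$ gives $\lim_n \|T_n\|_{B(x,r)} = \|T\|_{B(x,r)}$, so the monotonicity bound $\nu(T_n,x) \leq \|T_n\|_{B(x,r)}/r^2$ passes to the limit and yields $\limsup_n \nu(T_n,x) \leq \|T\|_{B(x,r)}/r^2$. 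Letting $r \to 0^+$ along admissible radii gives the desired inequality.

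For part (2), I would work locally near $x$ in a chart and write $T_i = dd^c u_i$ with $u_i$ plurisubharmonic satisfying $\nu(u_i,x) = \nu(T_i,x)$. The key ingredient is the classical upper bound $u_i(z) \leq \nu(T_i,x)\log|z-x| + O(1)$ valid near $x$. The strategy is to compare the wedge product $T_1 \wedge T_2$ with the model current obtained by replacing $u_1$ by its leading logarithmic singularity, namely $dd^c\bigl(\nu(T_1,x)\log|z-x|\bigr)\wedge T_2$. By Siu's formula, this model current carries a Dirac mass of weight $\nu(T_1,x)\cdot\nu(T_2,x)$ at $x$, and the comparison would transfer to a lower bound on $\nu(T_1\wedge T_2, x)$.

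The comparison of wedge products is the main obstacle, since one only assumes $T_1 \wedge T_2$ is well-defined without any further regularity on the potentials. To overcome this, I would approximate the potentials from above by the decreasing family $u_i^{(k)} := \max(u_i, k\log|z-x|)$ of locally bounded psh functions, for which Bedford--Taylor theory applies and all Monge--Amp\`ere manipulations are legitimate. The inequality would first be established for the truncated potentials by direct computation, and then extended by Demailly's monotone convergence theorem for wedge products of psh functions. Part (1), by contrast, is a routine semi-continuity argument based on monotonicity and weak convergence, and presents no substantive difficulty.
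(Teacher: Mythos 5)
First, a point of context: the paper does not prove this theorem at all. It is quoted from Demailly's book (Proposition 5.12 and Corollary 7.9 of the cited chapter) and used as a black box, so there is no in-paper argument to compare against; I assess your proposal on its own terms. Your part (1) is the standard argument and is essentially complete: monotonicity of $r\mapsto r^{-2}\|T\|_{B(x,r)}$, weak convergence of the trace measures (which does follow from weak convergence of the currents, since the locally uniformly bounded masses let one pass from smooth to continuous test forms), and the restriction to radii whose spheres are not charged. The only caveat is that on $\p^2$ with the Fubini--Study form the normalized mass is monotone only up to a factor $1+O(r^2)$, which is harmless in the limit.

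Part (2) contains a genuine gap. Write $\gamma_1=\nu(T_1,x)$ and $u^{(k)}=\max(u_1,k\log|z-x|)$. The ``direct computation'' you invoke, namely that $u^{(k)}$ coincides with $k\log|z-x|$ near $x$ so that $\nu(dd^cu^{(k)}\wedge T_2,x)=k\,\nu(T_2,x)$, is valid only for $k<\gamma_1$: that is precisely the regime in which the bound $u_1\le\gamma_1\log|z-x|+O(1)$ forces the maximum to be the logarithmic term. But the monotone convergence $dd^cu^{(k)}\wedge T_2\to dd^cu_1\wedge T_2$ requires $k\to+\infty$, and for $k\ge\gamma_1$ the potential $u^{(k)}$ is no longer logarithmic near $x$; proving $\nu(dd^cu^{(k)}\wedge T_2,x)\ge\gamma_1\nu(T_2,x)$ in that regime is exactly the statement you set out to prove, merely for a slightly better-behaved potential. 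Since part (1) only controls the Lelong number of the limit current through a $\limsup$ along the sequence that actually converges, the two regimes of $k$ never meet and the truncation scheme does not close the argument. The missing mechanism is the integration by parts in the Lelong--Jensen formula: one rewrites $\nu(dd^cu_1\wedge T_2,x)$ as a limit of sphere integrals in which $u_1$ appears \emph{undifferentiated} against the positive measure $dd^c\log|z-x|\wedge T_2$, and only in that form can the pointwise majorization $u_1\le\gamma_1\log|z-x|+O(1)$ be injected to yield the lower bound $\gamma_1\nu(T_2,x)$. (A minor misattribution as well: the identity $\nu\bigl(dd^c(\gamma_1\log|z-x|)\wedge T_2,x\bigr)=\gamma_1\nu(T_2,x)$ is a Lelong--Jensen/comparison computation, not an instance of Siu's decomposition formula, although the conclusion you draw from it is correct.)
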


\subsection{The moduli space $\mathcal{M}_2$ of quadratic rational maps.}\label{sectionm2}

\par The space $\rat_2$ of quadratic rational maps on $\p^1$ may be viewed as a Zarisky-open subset of $\p^5$ on which the group of M\"obius transformations acts by conjugation. The \emph{moduli space} $\mathcal{M}_2$ is, by definition, the quotient resulting from this action. Denote by $\alpha,\beta,\gamma$ the multipliers of the three fixed points of $f\in\rat_2$ and $\sigma_1=\alpha+\beta+\gamma$, $\sigma_2=\alpha\beta+\alpha\gamma+\beta\gamma$ and $\sigma_3=\alpha\beta\gamma$ the symmetric functions of these multipliers. Milnor has proved that $(\sigma_1,\sigma_2)$ is a good parametrization of $\mathcal{M}_2$ (see \cite{Milnor3}).

\begin{tm}[Milnor]
The map $\left[f\right]\in\mathcal{M}_2 \longmapsto (\sigma_1,\sigma_2)\in\C^2$ is a canonical biholomorphism.
\label{coordonneesm2}
\end{tm}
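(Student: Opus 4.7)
The plan is to decompose the proof into three tasks: showing that $\Phi:[f]\mapsto(\sigma_1,\sigma_2)$ is a well-defined holomorphic map $\mathcal{M}_2\to\C^2$, that it is bijective, and that it is a biholomorphism. Well-definedness and holomorphy are essentially by inspection: $\alpha,\beta,\gamma$ are the values of $f'$ at the three roots of $f(z)-z=0$, so the elementary symmetric functions $\sigma_1$, $\sigma_2$ are polynomial in the coefficients of $f\in\rat_2$, and any Möbius conjugation permutes the fixed points while preserving their multipliers, so $\sigma_1$ and $\sigma_2$ descend to holomorphic functions on $\mathcal{M}_2$.

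The key ingredient for bijectivity is the holomorphic Lefschetz fixed point formula: for any $f\in\rat_2$ with no parabolic fixed point,
$$
\frac{1}{1-\alpha}+\frac{1}{1-\beta}+\frac{1}{1-\gamma}=1.
$$
Clearing denominators yields the universal identity $\sigma_3=\sigma_1-2$, so the pair $(\sigma_1,\sigma_2)$ already determines $\sigma_3$ and hence the unordered multiplier triple $\{\alpha,\beta,\gamma\}$. To invert $\Phi$ on the open stratum of maps with three distinct non-parabolic fixed points, I would normalize these to $0,1,\infty$ by a Möbius transformation, whose stabilizer is trivial, then write $f$ in the reduced form $f(z)=z(az+b)/(dz+e)$ with $a+b=d+e$ and observe that the three multiplier equations uniquely determine $(a:b:d:e)$, yielding surjectivity and injectivity on that stratum simultaneously. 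The degenerate strata (coinciding fixed points, parabolic multipliers) are handled either via explicit alternative normal forms or, more cleanly, by continuity of $\Phi$ combined with the bijectivity already established on the dense open stratum.

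Once $\Phi$ is known to be a holomorphic bijection between smooth complex surfaces, a standard argument upgrades it to a biholomorphism: injectivity forces the differential to have full rank at every point (otherwise the rank theorem would locally produce either a positive-dimensional fiber or a non-injective neighborhood image), so $\Phi^{-1}$ is holomorphic by the inverse function theorem. The main obstacle I anticipate is precisely the careful handling of the degenerate strata in the bijectivity step: the Lefschetz formula needs a residue interpretation at parabolic fixed points, and the normalization to $0,1,\infty$ breaks down when two fixed points collide. The cleanest workaround is to carry out the explicit inversion on the dense open stratum of non-degenerate maps and extend to all of $\mathcal{M}_2$ using the algebraicity of both source and target together with the continuity of $\Phi$.
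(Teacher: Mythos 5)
The paper does not prove this statement: it is quoted from Milnor's \emph{Geometry and dynamics of quadratic rational maps}, so the only meaningful comparison is with Milnor's own argument (Lemma 3.1 and \S 3 there). Your overall strategy --- the holomorphic index formula $\sum 1/(1-\mu_i)=1$, hence the relation $\sigma_3=\sigma_1-2$, followed by a normalization that inverts $[f]\mapsto(\sigma_1,\sigma_2)$ explicitly --- is exactly Milnor's, and the identity $\sigma_3=\sigma_1-2$ is correctly derived. But there are two genuine gaps. First, the degenerate strata are not a removable nuisance: they are where the actual work lies, and ``continuity of $\Phi$ plus bijectivity on a dense open stratum'' proves neither injectivity nor surjectivity on the complement (a continuous bijection of a dense open set need not extend to one). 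Worse, your normalization of the three fixed points to $0,1,\infty$ breaks down on the entire curve $\Per_1(1)$ of maps with a multiple fixed point, not just at isolated points. Milnor avoids this by normalizing only \emph{two} fixed points, to $0$ and $\infty$, giving the form $z\mapsto z(z+\mu_1)/(\mu_2 z+1)$, which is a valid degree-two map precisely when $\mu_1\mu_2\neq 1$; since the relation $\sigma_3=\sigma_1-2$ forces some pair with $\mu_i\mu_j\neq1$ unless $\mu_1=\mu_2=\mu_3=1$, the exceptional locus shrinks to the single point $(\sigma_1,\sigma_2)=(3,3)$, handled by the normal form $z+1/z$. That reduction, and the verification that the normal form is rigid up to the residual M\"obius freedom, is the content you are missing.

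Second, your concluding step --- ``a holomorphic bijection between smooth complex surfaces is a biholomorphism'' --- presupposes that $\mathcal{M}_2$ is a smooth complex surface. It is the quotient of the five-dimensional manifold $\rat_2$ by a conjugation action of the three-dimensional M\"obius group which is neither free (e.g.\ $z\mapsto z^{-2}$ has a nontrivial finite automorphism group) nor obviously proper, so a priori $\mathcal{M}_2$ is only a topological quotient or an orbifold; the smooth structure is part of what the theorem asserts. The non-circular route is the one the explicit normal forms provide: they exhibit local holomorphic sections of $\rat_2\to\mathcal{M}_2$ through which $(\sigma_1,\sigma_2)$ is seen to be a chart, rather than invoking an inverse function theorem on a manifold structure that has not yet been constructed.
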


\par To study the curves
\begin{center}
$\Per_n(w)\pe\{[f]\in\mathcal{M}_2$ s.t. $f$ has a $n$-cycle of multiplier $w\}$,
\end{center}
 it is useful to compactify $\mathcal{M}_2$. In this context, it turns out that the projective compactification
\begin{center}
 $  \xymatrix {\relax
[f]\in\mathcal{M}_2 \ar@{^{(}->}[r] & [\sigma_1:\sigma_2:1]\in\p^2}$
\end{center}
is suitable. Denote by $\mathbb{L}_\infty$ the line at infinity of $\mathcal{M}_2$, i.e. 
\begin{center}
$\mathbb{L}_\infty=\{[\sigma_1:\sigma_2:0]\in\p^2$ / $(\sigma_1,\sigma_2)\in\C^2\setminus\{0\}\}$.
\end{center}
 Milnor has described the behavior of $\Per_n(w)$ at infinity as follows (see \cite{Milnor3} Lemmas 3.4 and section 4).

\begin{prop}[Milnor]\label{propMil}
\begin{enumerate}
\item For any $w\in\C$ the curve $\Per_1(w)$ is a line in $\p^2$ whose equation in $\C^2$ is $(w^2+1)\lambda_1-w\lambda_2-(w^3+2)=0$ and which intersects the line at infinity $\mathbb{L}_\infty$ at the point $[w:w^2+1:0]$.
\item For any $n\geq2$ and any $w\in\C$, the curve $\Per_n(w)$ is an algebraic curve in $\p^2$ whose degree equals the number $d(n)\sim 2^{n-1}$ of $n$-hyperbolic components of the Mandelbrot set. In addition, the intersection $\Per_n(w)\cap\mathbb{L}_\infty$is contained in the set $\{[1:u+1/u:0]\in\p^2$ / $u^q=1$ with $q\leq n\}$.
\end{enumerate}
\label{propMilnor}
\end{prop}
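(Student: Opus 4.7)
The plan is to treat the two parts separately: part (1) rests on the holomorphic fixed point formula, while part (2) combines a dynatomic construction, intersection with the polynomial slice $\Per_1(0)$, and a degeneration analysis.

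For part (1), recall that for $f\in\rat_2$ with fixed point multipliers $\alpha,\beta,\gamma$ all distinct from $1$, the holomorphic fixed point formula reads $\sum_i(1-\lambda_i)^{-1}=1$. Clearing denominators and expressing the result in the elementary symmetric functions collapses to the single universal identity $\sigma_3=\sigma_1-2$. Since $[f]\in\Per_1(w)$ means $w$ is a root of $T^3-\sigma_1T^2+\sigma_2T-\sigma_3$, substituting $\sigma_3=\sigma_1-2$ gives the linear equation $(w^2+1)\sigma_1-w\sigma_2-(w^3+2)=0$. Homogenizing and restricting to the hyperplane at infinity locates the intersection with $\mathbb{L}_\infty$ at $[w:w^2+1:0]$.

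For part (2), I would first establish algebraicity via the dynatomic polynomial $\Phi_n^*(f,z)=\prod_{k\mid n}(f^k(z)-z)^{\mu(n/k)}$: the resultant in $z$ of $\Phi_n^*$ and $(f^n)'(z)-w$ is a $\textup{PSL}_2(\C)$-invariant polynomial on $\rat_2\subset\p^5$ that descends to a polynomial equation on $\mathcal{M}_2=\C^2$ and closes projectively to an algebraic curve in $\p^2$. For the degree, I would intersect $\Per_n(w)$ with the line $\Per_1(0)$, which parametrizes the quadratic polynomial family $z\mapsto z^2+c$: for generic $w$ the affine intersection is in bijection with the $d(n)\sim 2^{n-1}$ centers of period-$n$ hyperbolic components of $\Mand$. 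An a priori analysis of sequences $[f_k]\in\Per_n(w)$ with $[f_k]\to\mathbb{L}_\infty$, using $\sigma_3=\sigma_1-2$, shows that any such limit has nonzero first projective coordinate; hence $\Per_n(w)$ does not meet $\Per_1(0)\cap\mathbb{L}_\infty=\{[0:1:0]\}$ and Bezout yields $\deg\Per_n(w)=d(n)$.

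The location of $\Per_n(w)\cap\mathbb{L}_\infty$ is the core difficulty. Pick $[f_k]\in\Per_n(w)$ with $[f_k]\to p\in\mathbb{L}_\infty$. The identity $\sigma_3=\sigma_1-2$ forces, up to a subsequence and relabeling, $\alpha_k\to\infty$, $\beta_k\to u\in\C^*$, $\gamma_k\to 1/u$, and therefore $\sigma_2/\sigma_1\to u+1/u$, i.e. $p=[1:u+1/u:0]$. The remaining task is the quantitative bound $u^q=1$ with $q\leq n$. The approach is to normalize representatives of $f_k$ so the three fixed points sit at $0,1,\infty$ and extract a local uniform limit on compacta avoiding the escaping fixed point: this limit is a M\"obius transformation fixing $0$ with multiplier $u$, parabolic-like in the sense that the rescaled dynamics of $f_k^n$ near the $\beta$-fixed point must admit a nontrivial limit cycle of multiplier $w$. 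A parabolic-implosion type analysis of this rescaled iteration forces $u$ to be a root of unity whose order $q$ divides into the cycle length, yielding $q\leq n$. Turning this heuristic into the sharp bound $q\leq n$ (rather than just ``$u$ is a root of unity'') is the delicate point and the main obstacle; it is precisely here that the finer asymptotic statements underlying Epstein's refinement (see Proposition \ref{propEps}) enter the picture.
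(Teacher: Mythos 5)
First, a point of reference: the paper offers no proof of this proposition at all --- it is quoted from Milnor (\cite{Milnor3}, Lemma 3.4 and Section 4) --- so there is no internal argument to compare yours against; your attempt has to stand on its own. Part (1) does: the index relation $\sum_i(1-\lambda_i)^{-1}=1$ collapses to $\sigma_3=\sigma_1-2$, substitution into $w^3-\sigma_1w^2+\sigma_2w-\sigma_3=0$ gives exactly the stated line, and the intersection with $\{Z=0\}$ is $[w:w^2+1:0]$; the degenerate case where some multiplier equals $1$ is recovered by Zariski density. This is Milnor's own derivation.

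Part (2) contains a genuine gap, which you yourself flag: the claim that the limit value $u$ satisfies $u^q=1$ with $q\leq n$ is asserted via a ``parabolic-implosion type analysis'' that you do not carry out, and you defer it to Epstein. Two corrections. First, the containment you need is Milnor's, not Epstein's: Epstein's refinement (Proposition \ref{propEps}) concerns the exact multiplicities $D_{p/q}(n)$ and which points $\infty_{p/q}$ are actually attained; the inclusion $\Per_n(w)\cap\mathbb{L}_\infty\subset\{[1:u+1/u:0]\ /\ u^q=1,\ q\leq n\}$ is Lemmas 4.1--4.2 of \cite{Milnor3} and needs no implosion. Second, the argument that closes it is more elementary than what you sketch. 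Normalize representatives $f_k$ so that two fixed points sit at $0$ and $\infty$; as $[f_k]\to\mathbb{L}_\infty$ the maps degenerate, locally uniformly away from a ``hole'', to a M\"obius transformation $g$ fixing $0$ and $\infty$ with multipliers $u$ and $1/u$. Since the $n$-cycle keeps a bounded multiplier $w$, not all of its points can collapse into the fixed points or the hole; a surviving limit point $z_0\in\C^*$ is then periodic for $g$ of some period $q\leq n$, so $g^q$ is a M\"obius map fixing the three points $0$, $\infty$, $z_0$, hence $g^q=\mathrm{id}$ and $u^q=1$. Making ``not all cycle points collapse'' rigorous is the one technical step you would have to supply, and it is the step your proposal replaces with a heuristic. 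Separately, your B\'ezout computation of the degree is incomplete as stated: you need the $d(n)$ affine intersection points with $\Per_1(0)$ to count with multiplicity one, and you need $\deg\Per_n(w)$ to be independent of $w$ so that a generic-$w$ count suffices; neither is addressed.
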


\par For reasons which will appear to be clear later, we shall denote by $\mathbb{L}_{\infty,\bif}$ the subset of $\mathbb{L}_\infty$ defined by
\begin{center}
$\mathbb{L}_{\infty,\bif}:=\{[1:e^{i\theta}+1/e^{i\theta}:0]\in\p^2 / \theta\in [0,2\pi]\}$.
\end{center}

\par Golberg and Keen showed how the Mandelbrot set $\Mand$ determines the connectedness locus for quadratic rational maps having an attracting fixed point (see \cite{goldbergkeen} Section $1$ and \cite{BB3} Theorem $5.4$ for a potential theoretic approach).

\begin{tm}[Goldberg-Keen, Bassanelli-Berteloot]\label{propMouvHol}
There exists a holomorphic motion $\sigma:\D\times\Per_1(0)\longrightarrow\C^2$ such that $\Mand_t\pe\sigma_t(\Mand)\Subset\Per_1(t)\cap\C^2$ is the connectedness locus of the line $\Per_1(t)$. Moreover, if $|w|\leq1$ and $n\geq1$, then $\Per_1(t)\cap\Per_n(w)\subset\Mand_t$.
\label{tmmvtmand}
\end{tm}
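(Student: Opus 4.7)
\emph{Plan.} My plan is to realize each line $\Per_1(t)\cap\C^2$, $t\in\D$, as a Mandelbrot-like family in the sense of Douady--Hubbard, and to assemble the resulting quasi-conformal straightenings into a holomorphic motion parametrized by $t\in\D$. I would start by fixing an algebraic normal form $f_{t,c}$ for quadratic rational maps having a distinguished fixed point of multiplier $t$, with $c\in\C$ parametrizing $\Per_1(t)\cap\C^2$ and $f_{0,c}(z)=z^2+c$. For $|t|<1$ the distinguished fixed point is attracting, its immediate basin captures one of the two critical points, and the remaining ``free'' critical point $\omega(t,c)$ depends holomorphically on $(t,c)$.

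The core construction is Douady--Hubbard's tubing: I would exhibit, uniformly in $t\in\D$, topological disks $U'(t,c)\Subset U(t,c)$ such that $f_{t,c}:U'\to U$ is a proper degree-two holomorphic map with critical point $\omega(t,c)$. This makes $(f_{t,c},U',U)_{c\in\C}$ a Mandelbrot-like family over the $c$-plane for each fixed $t$, with connectedness locus $\K_t\Subset\Per_1(t)\cap\C^2$ and a quasi-conformal straightening $\chi_t:\K_t\to\Mand$ that is a homeomorphism specializing to the identity at $t=0$ (since $f_{0,c}$ is already the quadratic polynomial $z^2+c$).

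The holomorphic motion $\sigma_t$ is morally $\chi_t^{-1}$, but to check that it genuinely satisfies the definition of a holomorphic motion I would invoke the Ahlfors--Bers principle: since the tubes $U'(t,c)\Subset U(t,c)$ vary holomorphically in $t$, the Beltrami coefficients used in Douady--Hubbard's straightening depend holomorphically on $t$, and solving the parametric measurable Riemann mapping equation yields a map $(t,c)\mapsto\sigma_t(c)$ holomorphic in $t$ for each $c\in\Mand$. Outside $\Mand$ one extends $\sigma_t$ using B\"ottcher coordinates at infinity for the polynomial-like restriction, and Slodkowski's theorem then glues everything into $\sigma:\D\times\Per_1(0)\to\C^2$; the equalities $\sigma_t(\Mand)=\K_t$ and the relative compactness $\Mand_t\Subset\Per_1(t)\cap\C^2$ are immediate from the construction.

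For the inclusion $\Per_1(t)\cap\Per_n(w)\subset\Mand_t$ when $|w|\leq 1$, suppose $[f]$ lies in this intersection. Then $f$ carries two distinct non-repelling cycles (the fixed point of multiplier $t$ and the $n$-cycle of multiplier $w$), and by the classical Fatou bound each of them must attract or accumulate on at least one critical orbit of $f$. Since $f$ has exactly two critical points, both critical orbits are bounded; in particular the free critical point $\omega(t,c)$ does not escape, so $[f]\in\K_t=\Mand_t$. The main technical obstacle is the uniformity of the tubing construction over $t\in\D$---producing a single coherent family of polynomial-like restrictions over the full parameter disk---which is the heart of Goldberg--Keen \cite{goldbergkeen} and is subsequently recovered through potential theory in \cite{BB3}.
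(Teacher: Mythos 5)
The paper does not actually prove this statement: it is quoted as a known theorem, with references to \cite{goldbergkeen} and to Theorem 5.4 of \cite{BB3}, so there is no internal proof to compare yours against. On its own terms, your sketch follows the Goldberg--Keen/Douady--Hubbard route (polynomial-like restrictions on each line $\Per_1(t)$, straightening, and a Mandelbrot-like family structure), which is indeed how the homeomorphism $\K_t\cong\Mand$ is obtained for each \emph{fixed} $t$; the potential-theoretic approach of \cite{BB3} that the paper points to is a different route, designed precisely to capture the holomorphic dependence on $t$.

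That holomorphic dependence is where your argument has a genuine gap. The parametric Ahlfors--Bers theorem gives holomorphic dependence of the solution of the Beltrami equation when the coefficient varies holomorphically in $t$, but the Beltrami coefficient of the hybrid conjugacy at the point $\sigma_t(c)$ is built from the dynamics of $f_{t,\sigma_t(c)}$, i.e.\ from the very parameter you are trying to construct; the problem is implicit, not a direct application of Ahlfors--Bers. Moreover, the straightening map of an analytic family of polynomial-like maps is canonical only on the connectedness locus and is in general only \emph{continuous} in the parameter (holomorphic or even quasiconformal dependence is known to fail in general), so the holomorphicity of $t\mapsto\sigma_t(c)$ --- which is the entire content of the phrase ``holomorphic motion'' --- needs a specific argument, for instance first producing the motion on a dense set of dynamically marked points that move algebraically with $t$ (centers of hyperbolic components, the intersections $\Per_n(w)\cap\Per_1(t)$, Misiurewicz parameters) and then invoking the $\lambda$-lemma or Slodkowski. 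A second, smaller issue: in your last paragraph ``both critical orbits are bounded'' is not the relevant conclusion on $\p^1$; what you need is that the free critical point does not lie in the basin of the distinguished attracting fixed point, and for $|w|=1$ this requires the refined Fatou--Shishikura count (distinct non-repelling cycles require distinct associated critical points, with Ma\~n\'e's theorem handling the Siegel/Cremer cases), not merely the statement that each cycle attracts or accumulates a critical orbit.
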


\subsection{Bifurcation currents in $\mathcal{M}_2$.}
Every rational map $f$ of degree $d\geq2$ on the Riemann sphere admits a maximal entropy measure $\mu_f$. The Lyapunov exponent of $f$ with respect to this measure is defined by
\begin{center}
$L(f)=\int_{\p^1}\log|f'|\mu_f$.
\end{center}
It turns out that the function $L:\rat_2\longrightarrow L(f)$ is \emph{p.s.h} and continuous on $\rat_2$. We still denote by $L$ the function induced on $\mathcal{M}_2$. The \emph{bifurcation current} $T_\bif$  is a closed positive (1, 1)-current on $\mathcal{M}_2$ which may be defined by
\begin{center}
$T_\bif=dd^cL$.
\end{center}
As it has been shown by DeMarco \cite{DeMarco2}, the support of $T_\bif$ concides with the bifurcation locus of the family in the classical sense of Ma\~ne-Sad-Sullivan (see also \cite{BB1}, Theorem 5.2). Using properties of the Lyapounov exponent, Bassanelli and the first author proved that the curves $\Per_n(0)$ equidistribute the bifurcation current (see \cite{BB3}).

\begin{tm}[Bassanelli-Berteloot]
The sequence $2^{-n}[\Per_n(0)]$ converges to $T_\bif$ in the sense of currents in $\mathcal{M}_2$. Moreover, $2^{-n}[\Per_n(0)]|_{\Per_1(0)}$ converges weakly to $\Delta(L|_{\Per_1(0)})=\frac{1}{2}\mu_\Mand$, where $\mu_\Mand$ is the harmonic measure of $\Mand$.
\label{tmequidistribution}
\end{tm}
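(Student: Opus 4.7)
The overall strategy is potential-theoretic: construct plurisubharmonic potentials $L_n$ for the currents $2^{-n}[\Per_n(0)]$ and show that $L_n\to L$ in $L^1_{\mathrm{loc}}(\mathcal{M}_2)$; continuity of $dd^c$ on $L^1_{\mathrm{loc}}$-convergent sequences of p.s.h. functions then yields the convergence of currents. More precisely, I would fix $n\geq 1$ and construct a holomorphic function $p_n(\lambda,w)$ on $\mathcal{M}_2\times\C$ which is, for each fixed $\lambda$, a monic polynomial in $w$ of degree $d(n)$ whose roots are the multipliers $w_C(\lambda)$ of the $n$-cycles of $f_\lambda$. The divisor of $\lambda\mapsto p_n(\lambda,0)$ is then $[\Per_n(0)]$, so that
\[
L_n(\lambda)\pe\frac{1}{2^n}\log|p_n(\lambda,0)|=\frac{1}{2^n}\sum_C\log|w_C(\lambda)|
\]
is a plurisubharmonic potential for $2^{-n}[\Per_n(0)]$.

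The core of the argument is the convergence $L_n\to L$ in $L^1_{\mathrm{loc}}$. One first checks that the $L_n$ are locally uniformly bounded above on $\mathcal{M}_2$: the multipliers $w_C(\lambda)$ are controlled by the dynamics of $f_\lambda$ on a uniform neighborhood of its Julia set. The decisive dynamical input is the Briend--Duval equidistribution of repelling periodic orbits toward the maximal entropy measure $\mu_{f_\lambda}$, which yields, at a generic parameter $\lambda$,
\[
\frac{1}{n\cdot 2^n}\sum_{f_\lambda^n(z)=z}\log\bigl|(f_\lambda^n)'(z)\bigr|\xrightarrow[n\to\infty]{}L(\lambda).
\]
Comparing this sum with $\sum_C\log|w_C(\lambda)|$ (their discrepancy, coming from cycles of period strictly dividing $n$, is negligible relative to $2^n$), one concludes that $L_n\to L$ pointwise on a dense subset of $\mathcal{M}_2$. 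Since $L$ is continuous and the $L_n$ are p.s.h. and locally uniformly bounded above, Hartogs' lemma upgrades this pointwise convergence to $L^1_{\mathrm{loc}}$ convergence, and taking $dd^c$ gives the first assertion.

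For the restriction to $\Per_1(0)$, I would parametrize this line by $c\in\C$ via $f_c(z)=z^2+c$. Manning's formula applied to $f_c$, whose only critical point is $0$, gives $L(f_c)=\log 2+G_{f_c}(0)$, where $G_{f_c}$ denotes the dynamical Green function of $f_c$; the functional equation $G_{f_c}\circ f_c=2\,G_{f_c}$ together with the Douady--Hubbard identification $G_{f_c}(c)=G_\Mand(c)$ yields $L|_{\Per_1(0)}(c)=\log 2+\tfrac12 G_\Mand(c)$, whence $\Delta(L|_{\Per_1(0)})=\tfrac12\mu_\Mand$, which accounts for the factor $\tfrac12$ in the statement. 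The weak convergence of the slices $2^{-n}[\Per_n(0)]|_{\Per_1(0)}$ to $\tfrac12\mu_\Mand$ then follows by reapplying the same potential-theoretic scheme to the one-dimensional family $\Per_1(0)$: the restrictions $L_n|_{\Per_1(0)}$ are subharmonic potentials for $2^{-n}[\Per_n(0)]|_{\Per_1(0)}$, whose supports consist of the centers of hyperbolic components of $\Mand$ of period dividing $n$, for which the classical equidistribution to the harmonic measure $\mu_\Mand$ can be invoked.

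The main obstacle is the dynamical step ensuring that $L_n\to L$ pointwise on a sufficiently large set. One has to show that the exponentially many $n$-cycles contribute correctly on average even at parameters with large Fatou components or parabolic cycles, which requires the ergodic-theoretic equidistribution results of Briend--Duval (or earlier work of Lyubich) rather than a naive counting argument; the continuity of the Lyapunov function $L$ on $\mathcal{M}_2$ is also crucial, as it is what allows Hartogs' lemma to promote a pointwise bound to an $L^1_{\mathrm{loc}}$ convergence.
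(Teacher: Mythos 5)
The paper does not actually prove this theorem---it is quoted from \cite{BB3}---but your proposal faithfully reconstructs the argument given there: the plurisubharmonic potentials $2^{-n}\log|p_n(\cdot,0)|$ for $2^{-n}[\Per_n(0)]$, the approximation formula for $L$ coming from the equidistribution of repelling cycles (together with the compactness/Hartogs argument needed to pass from pointwise to $L^1_{\mathrm{loc}}$ convergence, and a separate treatment of the slice since weak convergence of currents does not restrict to lines automatically), and the Manning--Douady--Hubbard computation $L|_{\Per_1(0)}(c)=\log 2+\frac{1}{2}G_\Mand(c)$ giving the factor $\frac{1}{2}$. The only slip is cosmetic: $d(n)\sim 2^{n-1}$ is the degree of the curve $\Per_n(0)$ in the parameter $\lambda$, whereas the degree of $p_n(\lambda,\cdot)$ in the variable $w$ is the number of $n$-cycles, which is of order $2^n/n$; this does not affect the normalization $2^{-n}$ or the rest of the argument.
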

\par As the function $L$ is continuous, one can define the \emph{bifurcation measure} $\mu_\bif$ of the moduli space $\mathcal{M}_2$ as the Monge-Amp\`ere mass of the  function $L$, i.e.
\begin{center}
$\mu_\bif\pe(dd^cL)^2\pe dd^c\left(L dd^cL\right)$.
\end{center}
This measure has been introduced by Bassanelli and the first author \cite{BB1}. Buff and Epstein also studied it in \cite{buffepstein}. Recall that a rational map $f$ is said to be strictly poscritically finite if each critical point of $f$ is preperiodic but not periodic. We denote by $\mathcal{SPCF}$ the set of classes of quadratic strictly postcritically finite rational maps. We also denote by $\mathcal{S}$ the set of \emph{Shishikura} maps, i.e. $\mathcal{S}\pe\{[f_0]\in\mathcal{M}_2$ / $f_0$ has $2$ distinct neutral cycles$\}$. Combining the work of Bassanelli and the first author with that of Buff and Epstein, we have the following.

\begin{tm}[Bassanelli-Berteloot, Buff-Epstein]
\hspace{1cm}
\begin{center}
$\supp(\mu_\bif)=\overline{\mathcal{SPCF}}=\overline{\mathcal{S}}$.
\end{center}
\label{tmsuppmubif}
\end{tm}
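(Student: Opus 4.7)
The plan is to establish the chain of inclusions
\begin{center}
$\overline{\mathcal{S}} \subseteq \overline{\mathcal{SPCF}} \subseteq \supp(\mu_\bif) \subseteq \overline{\mathcal{S}}$,
\end{center}
the main tool being the decomposition $T_\bif = T_1 + T_2$ into the two critical activity currents $T_i \pe dd^c g_i$, where $g_i$ is a continuous potential attached to the orbit of the $i$-th marked critical point. This gives
\begin{center}
$\mu_\bif = T_1 \wedge T_1 + 2\, T_1 \wedge T_2 + T_2 \wedge T_2$,
\end{center}
so $\supp(\mu_\bif)$ sits inside the simultaneous activity locus $\supp(T_1)\cap\supp(T_2)$, and $T_1\wedge T_2 \leq \tfrac{1}{2}\mu_\bif$.

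For $\overline{\mathcal{SPCF}}\subseteq\supp(\mu_\bif)$ I would adapt the argument of Theorem \ref{tmequidistribution} to each critical point separately: the preperiodic hypersurfaces $H_i^{n,k}\pe\{[f]:f^{n+k}(c_i)=f^k(c_i)\}$, suitably normalized by $2^{-n}$, converge weakly to $T_i$. Since the potentials $g_i$ are continuous, one may pass to the wedge, so the weighted intersections $H_1^{n_1,k_1}\cap H_2^{n_2,k_2}$ equidistribute against $T_1\wedge T_2$. Any strictly postcritically finite parameter $[f_0]$ lies at a generically discrete intersection of two such hypersurfaces (for the appropriate preperiodic combinatorics) and thus belongs to $\supp(T_1\wedge T_2) \subseteq \supp(\mu_\bif)$.

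For $\supp(\mu_\bif)\subseteq\overline{\mathcal{S}}$, I would use the equidistribution of $\Per_n(w)$ toward $T_\bif$ for $|w|\leq 1$ (an extension of Theorem \ref{tmequidistribution}) together with continuity of $L$: averaging in $(\theta_1,\theta_2)$ one gets that
\begin{center}
$2^{-(n+m)}[\Per_n(e^{i\theta_1})]\wedge[\Per_m(e^{i\theta_2})]$
\end{center}
converges to $\mu_\bif$. Thus $\supp(\mu_\bif)$ is the accumulation set of intersections $\Per_n(e^{i\theta_1})\cap\Per_m(e^{i\theta_2})$. Epstein's transversality theorem guarantees that these intersections are transverse outside a proper analytic subset, so for $(\theta_1,\theta_2)$-generic parameters the two resulting neutral cycles have distinct periods or multipliers, hence are genuinely distinct Shishikura configurations. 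Consequently every $[f_0]\in\supp(\mu_\bif)$ is a limit of Shishikura parameters.

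For $\overline{\mathcal{S}}\subseteq\overline{\mathcal{SPCF}}$, a small perturbation of a Shishikura parameter produces a parameter with two distinct parabolic cycles; Shishikura's quasi-conformal surgery then converts each parabolic cycle into a super-attracting one whose immediate basin absorbs the associated critical orbit, producing an SPCF parameter arbitrarily close. The principal obstacle I foresee is the middle inclusion: Epstein's quantitative transversality for the curves $\Per_n(w)$ at the line at infinity $\mathbb{L}_\infty$ is the deep technical input without which the two-parameter averaging argument fails to control how the intersections of $\Per_n(e^{i\theta_1})$ and $\Per_m(e^{i\theta_2})$ escape to infinity as $n,m\to\infty$.
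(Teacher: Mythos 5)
First, a structural remark: the paper does not prove this theorem at all. It is quoted as background, attributed to \cite{BB1} and \cite{buffepstein} (``Combining the work of Bassanelli and the first author with that of Buff and Epstein\dots''), so there is no internal proof to compare yours against. Judged on its own merits, your sketch points at the correct circle of ideas (activity currents, preperiodic hypersurfaces, the curves $\Per_n(e^{i\theta})$), but it has genuine gaps at the two hard points. The most serious one is the inference ``$[f_0]$ lies at a discrete intersection of $H_1^{n_1,k_1}$ and $H_2^{n_2,k_2}$, hence belongs to $\supp(T_1\wedge T_2)$''. Weak convergence of $2^{-(n+m)}[H_1^{n,k}]\wedge[H_2^{m,l}]$ to $T_1\wedge T_2$ gives no lower bound on the limit's mass near a point lying on these intersections: the contribution of the single point $[f_0]$ to the normalized intersection measure is $O(2^{-(n+m)})$, which tends to $0$. (Compare: centres of hyperbolic components lie on $\Per_n(0)$ and $2^{-n}[\Per_n(0)]\rightarrow T_\bif$, yet they are not in $\supp(T_\bif)$.) This is precisely where Buff--Epstein need Epstein's transversality theorem \emph{together with} a quantitative renormalization argument relating the potentials $g_i$ to local defining functions of the two hypersurfaces, so as to produce a definite amount of mass of $T_1\wedge T_2$ in every neighbourhood of $[f_0]$; your sketch relegates transversality to a side remark about a different inclusion.

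Two further problems. The containment $\supp(\mu_\bif)\subseteq\supp(T_1)\cap\supp(T_2)$ does not follow from $\mu_\bif=T_1\wedge T_1+2\,T_1\wedge T_2+T_2\wedge T_2$, because the pure term $T_1\wedge T_1$ is a priori supported only in $\supp(T_1)$; showing it charges nothing where the other critical point is passive is a separate, nontrivial step (and on $\mathcal{M}_2$ the critical points are not globally marked, so one must pass to the critically marked double cover before $T_1,T_2$ even make sense). Second, your surgery argument for $\overline{\mathcal{S}}\subseteq\overline{\mathcal{SPCF}}$ turns parabolic cycles into superattracting ones that capture the critical points: this yields critically \emph{periodic} parameters, which are by definition excluded from $\mathcal{SPCF}$ (``preperiodic but not periodic''). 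The usual route is instead that at a Shishikura parameter both critical points are active, and a Montel-type perturbation (performed for one critical point, then the other) lands each critical orbit on a repelling cycle. The inclusion $\supp(\mu_\bif)\subseteq\overline{\mathcal{S}}$ is the soundest part of your sketch and is essentially the Bassanelli--Berteloot argument, though the distinctness of the two neutral cycles is obtained simply by choosing distinct periods $n\neq m$, not by transversality.
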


These results are still valid in moduli spaces of degree $d$ rational maps for any $d\geq2$. Notice that Buff and the second author \cite{Article2} have proved that flexible Latt\`es maps belong to $\supp(\mu_\bif)$.

\section{Extension of the bifurcation current to Milnor's compactification.}

\par As the current $T_\bif$ is a weak limit of weighted integration currents on curves which are actually defined on $\p^2$, one may expect to naturally extend it to $\p^2$. We will show how this is possible and prove some basic properties of the extended current. More precisely, we establish the following result which may be considered as a measure-theoretic counterpart of Milnor's description of the bifurcation locus in $\mathcal{M}_2$.

\begin{tm}\label{teoExt}
There exists a closed positive $(1,1)$-current $\hat T_\bif$ on $\p^2$ whose mass equals $1/2$ and such that:
\begin{enumerate}
\item $\hat T_\bif|_{\C^2}=T_\bif$,
\item $2^{-n}[\Per_n(0)]$ converges to $\hat T_\bif$ in the sense of currents on $\p^2$,
\item $\supp(\hat T_\bif)=\supp(T_\bif)\cup\mathbb{L}_{\infty,\bif}$.
\end{enumerate}
\label{tmextension}
\end{tm}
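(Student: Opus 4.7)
The plan is to realise $\hat T_\bif$ as the unique weak-$*$ limit of the currents $T_n:=2^{-n}[\Per_n(0)]$ on $\p^2$, and then to read its support off this description. By Proposition \ref{propMil}(2), $\deg\Per_n(0)=d(n)\sim 2^{n-1}$, so the total masses $\|T_n\|_{\p^2}=2^{-n}d(n)$ converge to $1/2$. Uniform boundedness of masses yields weak-$*$ compactness, so any cluster value $\hat T$ is a closed positive $(1,1)$-current on $\p^2$ whose mass $\|\hat T\|_{\p^2}=\lim \|T_n\|_{\p^2}=1/2$ (total mass is recovered by pairing with the Fubini--Study form $\omega$), and by Theorem \ref{tmequidistribution} its restriction to $\mathcal{M}_2=\C^2$ coincides with $T_\bif$, giving (1).

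To upgrade this to full convergence one needs uniqueness of the cluster value. If $\hat T_1$ and $\hat T_2$ are two such limits, then $\hat T_1-\hat T_2$ is a closed real $(1,1)$-current on $\p^2$ vanishing on $\C^2$, hence supported on the irreducible curve $\mathbb{L}_\infty$. The standard support theorem for closed currents on analytic sets (Siu--Demailly) then forces $\hat T_1-\hat T_2=c\,[\mathbb{L}_\infty]$ for some $c\in\R$; pairing with $\omega$ and using $\|\hat T_i\|_{\p^2}=1/2$ with $\|[\mathbb{L}_\infty]\|_{\p^2}=1$ yields $c=0$. This establishes (2) and defines $\hat T_\bif$ unambiguously.

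For (3), the inclusion $\supp(\hat T_\bif)\supset\supp(T_\bif)\cup\mathbb{L}_{\infty,\bif}$ splits in two: the affine part is immediate from (1); for the infinite part, every element of $\Per_1(e^{i\theta})$ carries a neutral fixed point of multiplier $e^{i\theta}$ which does not persist in directions transverse to $\Per_1(e^{i\theta})$ in $\mathcal{M}_2$, so Ma\~n\'e--Sad--Sullivan puts the whole line in $\supp(T_\bif)$; by Proposition \ref{propMil}(1) it meets $\mathbb{L}_\infty$ at $[1:2\cos\theta:0]$, and closedness of $\supp(\hat T_\bif)$ in $\p^2$ then gives $\mathbb{L}_{\infty,\bif}\subset\supp(\hat T_\bif)$. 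The reverse inclusion amounts to proving, for any $p\in\mathbb{L}_\infty\setminus\mathbb{L}_{\infty,\bif}$, that $\|T_n\|_U\to 0$ for some neighbourhood $U$ of $p$; Proposition \ref{propMil}(2) already ensures that the finite sets $\Per_n(0)\cap\mathbb{L}_\infty$ all lie in the compact arc $\mathbb{L}_{\infty,\bif}$, so $U$ can be chosen disjoint from them.

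This last mass-vanishing step is the main obstacle: merely keeping $\Per_n(0)\cap\mathbb{L}_\infty$ away from $p$ does not a priori bound the area of $\Per_n(0)\cap U$ by $o(2^n)$. I expect to handle it by passing to an affine chart at $p$ or blowing $p$ up, and then invoking Milnor's explicit behaviour at infinity underlying Proposition \ref{propMil} together with Epstein's refinement (Proposition \ref{propEps}) to control the intersection multiplicities of $\Per_n(0)$ with $\mathbb{L}_\infty$ and to force them to concentrate on $\mathbb{L}_{\infty,\bif}$.
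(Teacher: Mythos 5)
Your handling of the mass, of item (1), and of item (2) is essentially correct, and your uniqueness argument is genuinely different from the paper's: you compare two cluster values directly, observe that their difference is a closed order-zero current supported on $\mathbb{L}_\infty$, invoke the support theorem to get $\hat T_1-\hat T_2=c[\mathbb{L}_\infty]$, and kill $c$ by pairing with $\omega$. The paper instead first constructs the trivial extension of $T_\bif$ via Skoda--El Mir, then applies Siu's decomposition $T=S+\alpha[\mathbb{L}_\infty]$ to a cluster value and rules out $\alpha>0$ using Lemma \ref{lmcluster}. Your route is shorter for proving convergence, but it yields no information on whether the limit charges $\mathbb{L}_\infty$ -- information the paper's route provides and which is needed for item (3), since a nonzero $[\mathbb{L}_\infty]$-component would put all of $\mathbb{L}_\infty$ in the support.

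The gap you flag in item (3) is real, and the repair you sketch would not close it. Controlling the intersection $\Per_n(0)\cap\mathbb{L}_\infty$ and its multiplicities (Milnor, Epstein) can never bound the area of $\Per_n(0)$ in a neighbourhood of a point of $\mathbb{L}_\infty$ that the curve merely avoids, as you yourself observe; blowing up or changing charts does not change this. The missing ingredient is the paper's Lemma \ref{lmcluster}: $\overline{\bigcup_n\Per_n(0)}\cap\mathbb{L}_\infty=\mathbb{L}_{\infty,\bif}$, i.e.\ every $\zeta\in\mathbb{L}_\infty\setminus\mathbb{L}_{\infty,\bif}$ has a full $\p^2$-neighbourhood $U$ disjoint from \emph{all} the curves $\Per_n(0)$, so that $\|2^{-n}[\Per_n(0)]\|_U=0$ exactly, not merely $o(1)$. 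This is proved dynamically, not algebraically: such a $\zeta$ equals $\Per_1(u_0)\cap\mathbb{L}_\infty$ for some $u_0\in\D$, any $\lambda_k\in\Per_{n_k}(0)$ converging to $\zeta$ eventually lies on a line $\Per_1(u_k)$ with $u_k\to u_0$, and Theorem \ref{propMouvHol} traps $\Per_{n_k}(0)\cap\Per_1(u_k)$ inside $\Mand_{u_k}$, while $\bigcup_{|u|\le r}\Mand_u\Subset\C^2$ -- a contradiction. Without this lemma (or an equivalent), both the inclusion $\supp(\hat T_\bif)\cap\mathbb{L}_\infty\subset\mathbb{L}_{\infty,\bif}$ and the absence of an $[\mathbb{L}_\infty]$-component in the limit remain unproved. (The other half of (3), namely that $\mathbb{L}_{\infty,\bif}\subset\supp(\hat T_\bif)$ because the lines $\Per_1(e^{i\theta})$ consist of parameters with a non-persistent neutral fixed point, is fine.)
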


\par We shall use the two following lemmas which are of independant interest. The first one is essentially due to Milnor (see Theorem 4.2 in \cite{Milnor3} or Theorem 2.3 in \cite{BB3}). The second one will be proved at the end of the section.

\begin{lm}
$\|[\Per_n(0)]\|_{\p^2}\sim 2^{n-1}$.
\label{lmmass}
\end{lm}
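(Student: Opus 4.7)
The plan is to deduce the estimate directly from the tools already assembled. The key observation is that $\Per_n(0)$ is an algebraic curve in $\p^2$ by Proposition~\ref{propMilnor}(2), so its integration current has mass equal to its degree:
\begin{equation*}
\|[\Per_n(0)]\|_{\p^2}\;=\;\int_{\Per_n(0)}\omega\;=\;\deg\Per_n(0),
\end{equation*}
where $\omega$ is the Fubini--Study form. This is precisely the identity $\|[V]\|_{\p^2}=\deg(V)$ recalled in Section~2.1. Then, again by Proposition~\ref{propMilnor}(2), one has $\deg\Per_n(0)=d(n)$, where $d(n)$ denotes the number of hyperbolic components of exact period $n$ in the Mandelbrot set $\Mand$. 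So the entire lemma reduces to establishing the asymptotic $d(n)\sim 2^{n-1}$.

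For that last step, I would recall that a center of a hyperbolic component of $\Mand$ of period $n$ is by definition a parameter $c\in\C$ for which the critical point $0$ of $f_c(z)=z^2+c$ has exact period $n$. Such centers are roots of $p_n(c):=f_c^{\circ n}(0)$, a polynomial of degree $2^{n-1}$ in $c$. Subtracting the centers of period strictly dividing $n$---of which there are at most $\sum_{d\mid n,\;d<n}2^{d-1}=O(2^{n/2})$---one obtains $d(n)=2^{n-1}+O(2^{n/2})$, whence $d(n)\sim 2^{n-1}$. Since the relevant counting and degree formulas are exactly the content of Milnor's Lemma~3.4 in \cite{Milnor3}, there is in fact no real obstacle: the lemma is essentially a bookkeeping consequence of Proposition~\ref{propMilnor}(2) combined with the identification of mass and degree in $\p^2$. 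The only point requiring care is to make sure we are indeed counting curves in $\p^2$ (and not some birational model), but this is built into the setup of $\mathcal{M}_2\hookrightarrow\p^2$ via $(\sigma_1,\sigma_2)\mapsto[\sigma_1:\sigma_2:1]$ fixed in Section~\ref{sectionm2}.
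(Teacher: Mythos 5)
Your proof is correct and follows essentially the same route as the paper, which simply attributes the lemma to Milnor's degree computation (Theorem 4.2 of \cite{Milnor3}): mass equals degree for the integration current on an algebraic curve, and $\deg\Per_n(0)=d(n)\sim 2^{n-1}$ is already asserted in Proposition \ref{propMil}. Your additional counting of centers (which tacitly uses that the roots of $f_c^{\circ n}(0)$ are simple, i.e.\ Gleason's lemma) just re-proves that asymptotic.
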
		

\begin{lm}
$\displaystyle\overline{\bigcup_{n\geq1}\Per_n(0)}\cap\mathbb{L}_\infty=\mathbb{L}_{\infty,\bif}$.
\label{lmcluster}
\end{lm}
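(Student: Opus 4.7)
The plan is to establish the two inclusions separately; most of the substance is in the $\supseteq$ direction.

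For the inclusion $\subseteq$, I would let $z\in\overline{\bigcup_{n\geq 1}\Per_n(0)}\cap\mathbb{L}_\infty$ and choose a sequence $z_k\in\Per_{n_k}(0)$ with $z_k\to z$. After passing to a subsequence, either the $n_k$ are bounded or $n_k\to+\infty$. In the bounded case, I may assume $n_k$ is constant equal to some $n$; since $\Per_n(0)$ is a closed algebraic curve in $\p^2$, the point $z$ lies in $\Per_n(0)\cap\mathbb{L}_\infty$, which by Proposition \ref{propMilnor}(2) is contained in $\mathbb{L}_{\infty,\bif}$ for $n\geq 2$. In the case $n_k\to+\infty$, I would argue locally: near any point of $\mathbb{L}_\infty\setminus\mathbb{L}_{\infty,\bif}$, Proposition \ref{propMilnor}(2) ensures that the curves $\Per_{n_k}(0)$ have no points at infinity, so a branch of $\Per_{n_k}(0)$ approaching $\mathbb{L}_\infty$ must emanate from a point of $\Per_{n_k}(0)\cap\mathbb{L}_\infty\subset\mathbb{L}_{\infty,\bif}$; letting $k\to+\infty$ and invoking the closedness of $\mathbb{L}_{\infty,\bif}$ forces $z\in\mathbb{L}_{\infty,\bif}$.

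For the inclusion $\supseteq$, the points $[1:2\cos(2\pi p/q):0]$ with $p,q\in\mathbb{Z}$ and $\gcd(p,q)=1$ are dense in $\mathbb{L}_{\infty,\bif}$, so it suffices to produce each such point as a limit of points in $\bigcup_n\Per_n(0)$. Fix such a $p/q$ and take $t_k\in\D$ with $t_k\to e^{2\pi i p/q}$; by Proposition \ref{propMilnor}(1), the intersection point $\Per_1(t_k)\cap\mathbb{L}_\infty=\{[t_k:t_k^2+1:0]\}$ converges to $[1:e^{2\pi i p/q}+e^{-2\pi i p/q}:0]=[1:2\cos(2\pi p/q):0]$. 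Using the holomorphic motion $\sigma_t$ of Theorem \ref{tmmvtmand} together with Petersen's description of the collapse of the $p/q$-limb of $\Mand_t$ as $t\to e^{2\pi i p/q}$ (alluded to in the introduction), one obtains centers of hyperbolic components $c_k\in\Per_1(t_k)\cap\Per_{n_k}(0)\subset\Mand_{t_k}$ (for suitable $n_k$) that are pushed toward $\Per_1(e^{2\pi i p/q})\cap\mathbb{L}_\infty$. This produces a sequence in $\bigcup_n\Per_n(0)$ with the required limit, and density closes the argument.

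The main obstacle is the $\supseteq$ direction: showing that each rational point $[1:2\cos(2\pi p/q):0]$ is actually attained as a limit of points in $\bigcup_n\Per_n(0)$ requires more than the upper bound of Proposition \ref{propMilnor}(2), and rests on Petersen's nontrivial collapsing-limb phenomenon for the Mandelbrot copies $\Mand_t$ sitting in the lines $\Per_1(t)$, matched with the identification of $\Per_1(t_k)\cap\mathbb{L}_\infty$ from Proposition \ref{propMilnor}(1). The $\subseteq$ direction is conceptually more routine, but the subcase $n_k\to+\infty$ still requires some care in tracking how the high-degree curves $\Per_{n_k}(0)$ approach the line at infinity.
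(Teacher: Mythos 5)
The weak point is your treatment of the inclusion $\subseteq$ in the case $n_k\to+\infty$, which is precisely where the content of the lemma lies. Knowing that $\Per_{n_k}(0)\cap\mathbb{L}_\infty\subset\mathbb{L}_{\infty,\bif}$ for every $k$ does \emph{not} prevent finite points $\lambda_k\in\Per_{n_k}(0)\cap\C^2$ from escaping to infinity and accumulating at a point of $\mathbb{L}_\infty\setminus\mathbb{L}_{\infty,\bif}$: a branch of a curve can travel arbitrarily far ``parallel'' to the line at infinity before landing at its prescribed points on $\mathbb{L}_\infty$. For instance, the curves $C_k=\{xy=k\}$ all meet the line at infinity only at $[1:0:0]$ and $[0:1:0]$, yet the points $(\sqrt{k},\sqrt{k})\in C_k$ converge to $[1:1:0]$. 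So the purely algebro-geometric information of Proposition \ref{propMilnor} is insufficient, and the step ``letting $k\to+\infty$ and invoking the closedness of $\mathbb{L}_{\infty,\bif}$'' does not close the argument; this is a genuine gap, not a matter of detail.

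The paper rules out this scenario with a dynamical input that your argument for $\subseteq$ never uses. By Theorem \ref{tmmvtmand}, $\Per_n(0)\cap\Per_1(u)\subset\Mand_u=\sigma_u(\Mand)$ for $n\geq2$ and $u\in\D$, and the compactness of $\Mand$ together with the continuity of the motion $\sigma$ gives the uniform bound $\bigcup_{|u|\leq r}\Mand_u\Subset\C^2$ for every $r<1$. A point $\zeta\in\mathbb{L}_\infty\setminus\mathbb{L}_{\infty,\bif}$ lies on $\Per_1(u_0)$ for a unique $u_0\in\D$; a sequence $\lambda_k\in\Per_{n_k}(0)$ converging to $\zeta$ must then lie on lines $\Per_1(u_k)$ with $u_k\to u_0$, hence in $\bigcup_{|u|\leq r}\Mand_u$ for some $r<1$, contradicting $\|\lambda_k\|\to+\infty$. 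This uniform compactness is exactly what your local argument is missing, and I see no way to avoid it. As for your $\supseteq$ direction, the route through Petersen's collapsing limbs is workable but heavier than necessary: Epstein's Proposition \ref{propEpstein} already gives $\nu([\Per_n(0)],\infty_{p/q})=D_{p/q}(n)>0$ for suitable $n$, so the points $\infty_{p/q}$, which are dense in $\mathbb{L}_{\infty,\bif}$, themselves belong to $\bigcup_n\Per_n(0)$.
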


\begin{proof} We first justify the existence of $\hat T_\bif$. According to the Skoda-El Mir Theorem (see \cite{Demailly} Theorem 2.3 page 139), the trivial extension of $T_\bif$ through the line at infinity $\mathbb{L}_\infty$ is a closed positive $(1,1)$-current on $\p^2$ if $T_\bif$ has locally bounded mass near $\mathbb{L}_\infty$. The Lemma \ref{lmmass}, combined with the fact that $2^{-n}[\Per_n(0)]$ converges to $T_\bif$ on $\C^2$ (see Theorem \ref{tmequidistribution}), immediatly yields $\|T_\bif\|_{\C^2}\leq 1/2$. The extension $\hat T_\bif$ of $T_\bif$ therefore exists and $\|\hat T_\bif\|_{\p^2}\leq1/2$.

~

\par Let us now prove that $2^{-n}[\Per_n(0)]$ converges to $\hat T_\bif$ on $\p^2$ and $\|\hat T_\bif\|_{\p^2}=1/2$. By Lemma \ref{lmmass}, $(2^{-n}[\Per_n(0)])_{n\geq1}$ is a sequence of closed positive $(1,1)$-currents with uniformly bounded mass on $\p^2$. According to the compactness porperties of such families of currents, it suffices to show that $\hat T_\bif$ is the only limit value of $2^{-n}[\Per_n(0)]$.
\par Assume that $2^{-n_k}[\Per_{n_k}(0)]$ converges to $T$ on $\p^2$. By Siu's Theorem (see \cite{Demailly} Theorem 8.16 page 181), one has $T=S+\alpha[\mathbb{L}_\infty]$, where $S$ has no mass on $\mathbb{L}_\infty$. But, by Lemma \ref{lmcluster}, the line $\mathbb{L}_\infty$ is not contained in $\supp(T)$. Thus $\alpha=0$ and $T$ has no mass on $\mathbb{L}_\infty$. Since $T|_{\C^2}=\lim_k2^{-n_k}[\Per_{n_k}(0)]=T_\bif$, this shows that $T$ is actually the trivial extension of $T_\bif$ through $\mathbb{L}_\infty$ and therefore, according to the first part of the proof, equals $\hat T_\bif$.
\par Now, Lemma \ref{lmmass} immediatly yields $\|\hat T_\bif\|_{\p^2}=1/2$.
\end{proof}

\par \textit{Proof of Lemma \ref{lmcluster}.} Let $\sigma:\Per_1(0)\times\D\longrightarrow \bigcup_{|u|<1}\Per_1(u)$ be the holomorphic motion given by Theorem \ref{tmmvtmand}. Let us set $\Mand_u\pe\sigma_u(\Mand)
$. As $\Mand$ is compact and $\sigma$ is continuous one has
\begin{center}
$\displaystyle\bigcup_{|u|\leq r<1}\Mand_u\Subset\C^2$ for all $0<r<1$.
\end{center}
Let us also recall that $\Per_n(0)\cap \Per_1(u)\subset\Mand_u$ for $n\geq 2$ and $u\in\D$.
\par We now proceed by contradiction and assume that there exists
\begin{center}
$\zeta\in\big(\overline{\bigcup_n\Per_n(0)}\cap\mathbb{L}_\infty\big)\setminus\mathbb{L}_{\infty,\bif}$.
\end{center}
By Proposition \ref{propMilnor}, $\zeta\in\Per_1(u_0)$ for some $u_0\in\D$. Let us pick $\lambda_k\in\Per_{n_k}(0)$ such that $\lambda_k\rightarrow\zeta$. Then there exists $u_k\in\D$ such that $u_k\rightarrow u_0$ and $\lambda_k\in\Per_1(u_k)$, which is impossible since
\begin{center}
$\lambda_k\in\Per_{n_k}(0)\cap\Per_1(u_k)\subset\Mand_{u_k}\subset\displaystyle\bigcup_{|u|<r}\Mand_u$
\end{center}
for some $|u_0|<r<1$.\hfill$\Box$

\section{Lelong numbers of the bifurcation current at infinity.}

\par The aim of the present section is to compute the Lelong numbers of $\hat T_\bif$ at any point. This is related to previous works of Petersen (\cite{petersen}) and Epstein (\cite{epstein}) which we briefly describe.

\par Let $\heartsuit$ be the main cardioid of the Mandelbrot set and $\mathcal{L}_{p/q}$ the $p/q$-\emph{limb} of $\Mand$ (see \cite{branner} page $84$). Denote by $d_{p/q}(n)$ the number of $n$-hyperbolic component of $\mathcal{L}_{p/q}$ and set
\begin{center}
$D_{p/q}(n)=\left\{\begin{array}{ll}
d_{p/q}(n) & \text{if }p/q=1/2,\\
2d_{p/q}(n) & \text{otherwise.}
\end{array}\right.$
\end{center}
Let $\sigma$ be the holomorphic motion of $\Per_1(0)$ given by Theorem \ref{tmmvtmand} and
\begin{center}
$\infty_{p/q}\pe[1:2\cos(2\pi p/q):0]$
\end{center}
if $1\leq p\leq q/2$ and $p\wedge q=1$. Petersen has proved that the limb $\sigma_u(\mathcal{L}_{p/q})$ of $\Mand_u$ disappears when $u$ tends non-tangentially to $e^{-2i\pi p/q}$. Using this result, Epstein has precisely described the intersection $\Per_n(w)\cap\mathbb{L}_\infty$. He namely proved the following:

\begin{prop}[Epstein]\label{propEps}
For any $w\in\C$ and any $n\geq2$, 
\begin{center}
$\displaystyle[\Per_n(w)]\wedge[\mathbb{L}_\infty]=\sum_{1\leq p\leq q/2\leq n/2 \atop p\wedge q=1}\nu([\Per_n(w)],\infty_{p/q})\delta_{\infty_{p/q}}=\sum_{1\leq p\leq q/2\leq n/2 \atop p\wedge q=1}D_{p/q}(n)\delta_{\infty_{p/q}}$.
\end{center}
\label{propEpstein}
\end{prop}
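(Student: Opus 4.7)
The plan is to combine Milnor's constraint on $\Per_n(w)\cap\mathbb{L}_\infty$ (Proposition~\ref{propMilnor}) with Petersen's collapsing of the $p/q$-limbs in order to exhibit the correct number of analytic branches of $\Per_n(w)$ at each $\infty_{p/q}$, and then to close the argument by a Bezout count matching $\deg\Per_n(w)=d(n)$ with $\sum_{p/q}D_{p/q}(n)$.

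I would first treat $|w|$ small and work with the holomorphic motion $\sigma$ of Theorem~\ref{propMouvHol}. For each $n$-hyperbolic component $H$ of $\mathcal{L}_{p/q}$ with $1\leq p\leq q/2$, $q\leq n$ and $p\wedge q=1$, the multiplier chart of the $n$-cycle on $\sigma_u(H)\subset\Mand_u$ yields, via the implicit function theorem, a holomorphic arc $u\mapsto c_H(u;w)\in\sigma_u(H)\cap\Per_n(w)$ defined on $\D$. Petersen's theorem then forces $c_H(u;w)\to\infty_{p/q}$ as $u\to e^{-2\pi ip/q}$ non-tangentially. When $p/q\neq 1/2$, complex conjugation maps $\mathcal{L}_{p/q}$ onto $\mathcal{L}_{(q-p)/q}$ and produces an independent family of arcs accumulating at $\infty_{p/q}=\infty_{(q-p)/q}$ from the conjugate sector; for $p/q=1/2$ the two families coincide. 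Altogether $D_{p/q}(n)$ pairwise disjoint analytic arcs in $\Per_n(w)$ cluster at $\infty_{p/q}$.

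Since $\Per_n(w)$ is an algebraic curve, each arc is contained in a germ of $\Per_n(w)$ at $\infty_{p/q}$; pairwise disjointness of the arcs (which are injectively parametrized by $u$ on pairwise disjoint open subsets of $\D$ coming from the distinct connected components $\sigma_u(H)$) forces them to fill $D_{p/q}(n)$ distinct Puiseux sheets of that germ. Counting sheets with multiplicity yields
\begin{equation*}
\nu([\Per_n(w)],\infty_{p/q})\geq D_{p/q}(n).
\end{equation*}

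To close, I invoke Bezout: since $\mathbb{L}_\infty$ is not a component of $\Per_n(w)$ and $\Per_n(w)\cap\mathbb{L}_\infty$ is contained in the finite set $\{\infty_{p/q}:1\leq p\leq q/2\leq n/2,\,p\wedge q=1\}$ by Milnor,
\begin{equation*}
d(n)=\|[\Per_n(w)]\wedge[\mathbb{L}_\infty]\|=\sum_{p/q}(\Per_n(w)\cdot\mathbb{L}_\infty)_{\infty_{p/q}}\geq\sum_{p/q}\nu([\Per_n(w)],\infty_{p/q})\geq\sum_{p/q}D_{p/q}(n)=d(n),
\end{equation*}
where the middle inequality is Demailly's estimate (Theorem~\ref{semicont}(2)) applied to $T_1=[\Per_n(w)]$ and $T_2=[\mathbb{L}_\infty]$ (noting $\nu([\mathbb{L}_\infty],x)=1$), and the last equality holds because every $n$-hyperbolic component of $\Mand$ (for $n\geq2$) lies in exactly one limb. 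All inequalities therefore become equalities, simultaneously identifying the intersection current with the weighted Dirac sum $\sum\nu([\Per_n(w)],\infty_{p/q})\delta_{\infty_{p/q}}$ and pinning down each weight as $D_{p/q}(n)$, at least for $|w|$ small. The extension to arbitrary $w\in\C$ follows from upper semi-continuity of Lelong numbers (Theorem~\ref{semicont}(1)) applied to the weakly continuous, mass-preserving family $w\mapsto[\Per_n(w)]$: integer-valued upper semi-continuous coefficients summing to the constant $d(n)$ must be constant in $w$. The main technical obstacle is the lower bound $\nu([\Per_n(w)],\infty_{p/q})\geq D_{p/q}(n)$: distinguishing the $D_{p/q}(n)$ Puiseux sheets at $\infty_{p/q}$ is a delicate local-analytic task that relies essentially on Petersen's (non-algebraic) collapsing theorem together with the injectivity of the $u$-parametrization of distinct hyperbolic components.
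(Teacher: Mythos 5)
First, a remark on provenance: the paper gives no proof of this statement --- it is quoted as Epstein's theorem from \cite{epstein} --- so you are reproving a cited result rather than reconstructing an argument the authors actually write down. Your global architecture is nevertheless a reasonable one: the squeeze $d(n)=\sum_{p/q}\big([\Per_n(w)]\wedge[\mathbb{L}_\infty]\big)(\{\infty_{p/q}\})\geq\sum_{p/q}\nu([\Per_n(w)],\infty_{p/q})\geq\sum_{p/q}D_{p/q}(n)=d(n)$ correctly reduces both asserted equalities to the single lower bound $\nu([\Per_n(w)],\infty_{p/q})\geq D_{p/q}(n)$, and the propagation from small $|w|$ to all of $\C$ by semicontinuity of the integer coefficients with constant sum is fine.

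The genuine gap is precisely that lower bound. The inference ``$D_{p/q}(n)$ pairwise disjoint arcs cluster at $\infty_{p/q}$, hence they fill $D_{p/q}(n)$ distinct Puiseux sheets, hence $\nu([\Per_n(w)],\infty_{p/q})\geq D_{p/q}(n)$'' is not valid: two disjoint arcs can lie on the same irreducible local branch (the smooth branch $\{x=y^2\}$ carries the disjoint arcs $t\mapsto(t^2,t)$ and $t\mapsto(t^2,-t)$, both accumulating at the origin, yet has multiplicity $1$). In your setting this is not a hypothetical worry, because your arcs are fibered over the pencil of lines $\Per_1(u)$: a single branch $\Gamma$ of $\Per_n(w)$ at $\infty_{p/q}$ that is tangent to $\Per_1(e^{-2i\pi p/q})$ meets each nearby line $\Per_1(u)$ in several points near $\infty_{p/q}$, so several of your arcs $c_H(\cdot;w)$ may sit on that one branch, which then contributes only its own multiplicity --- possibly $1$ --- to the Lelong number. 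What the arc construction legitimately yields is a lower bound on the local intersection numbers of $\Per_n(w)$ with the two lines $\Per_1(e^{\mp 2i\pi p/q})$ at $\infty_{p/q}$; since any such intersection number \emph{dominates} $\nu([\Per_n(w)],\infty_{p/q})$, bounding it from below gives no lower bound on the Lelong number. A second unaddressed point of the same nature: the two conjugate families of arcs (one produced as $u\to e^{-2i\pi p/q}$, the other as $u\to e^{2i\pi p/q}$) are not obviously supported on disjoint sets of branches, since a point of $\p^2$ close to $\infty_{p/q}$ lies simultaneously on a line $\Per_1(u)$ with $u$ near $e^{-2i\pi p/q}$ and on a line $\Per_1(v)$ with $v$ near $e^{2i\pi p/q}$ (its two finite fixed-point multipliers converge to these two roots of unity). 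Ruling out all such coincidences --- i.e., controlling the tangencies of the branches of $\Per_n(w)$ at $\infty_{p/q}$ relative to the fixed-point-multiplier coordinates --- is the actual analytic content of Epstein's theorem, and your proposal replaces it with a one-line appeal to disjointness that does not carry the weight.
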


\par From the above Proposition and by using the previous section, we deduce the following.

\begin{tm}\label{teoLelN}
\begin{enumerate}
\item The Lelong numbers of $\hat T_\bif$ are given by 
\begin{center}
$\nu(\hat T_\bif,a)=\left\{\begin{array}{cl}
1/6 & \text{ if }a=\infty_{1/2},\\
1/(2^q-1) & \text{ if }a=\infty_{p/q} \text{ and },q\geq3\\
0 & \text{ if } a\in\p^2\setminus\{\infty_{p/q}\ / \ p\wedge q=1,\ 1\leq p\leq q/2\}.
\end{array}\right.$
\end{center}
\item The measure $\hat T_\bif\wedge [\mathbb{L}_\infty]$ is discrete and given by
\begin{center}
$\hat T_\bif\wedge[\mathbb{L}_\infty]=\displaystyle\sum_{1\leq p\leq q/2 \atop p\wedge q=1}\nu(\hat T_\bif,\infty_{p/q})\delta_{\infty_{p/q}}$.
\end{center}
\end{enumerate}
\label{tmnombrelelong}
\end{tm}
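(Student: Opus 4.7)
My plan is to prove both assertions simultaneously by pinning down the Lelong numbers $\nu(\hat T_\bif, \infty_{p/q})$ from below via Demailly's semi-continuity and from above via the total mass of $\hat T_\bif \wedge [\mathbb{L}_\infty]$, with a classical number-theoretic identity ensuring that the two bounds match at the stated values $c_{1/2}=1/6$ and $c_{p/q}=1/(2^q-1)$ for $q\geq 3$.

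For the lower bound, Theorem \ref{teoExt}(ii) gives $2^{-n}[\Per_n(0)]\to\hat T_\bif$ on $\p^2$. Combining Theorem \ref{semicont}(1) with Proposition \ref{propEps} (which identifies the Lelong number $\nu([\Per_n(0)],\infty_{p/q})$ with the intersection multiplicity $D_{p/q}(n)$) yields
\[
\nu(\hat T_\bif, \infty_{p/q}) \;\geq\; \limsup_n 2^{-n} D_{p/q}(n).
\]
To evaluate this limit I would restrict the Bassanelli--Berteloot equidistribution from Theorem \ref{tmequidistribution} to $\Per_1(0)$: weak convergence $2^{-n}[\Per_n(0)]|_{\Per_1(0)}\to \tfrac12\mu_\Mand$ applied to the interior of the $p/q$-limb gives $2^{-n}d_{p/q}(n)\to \tfrac12 \mu_\Mand(\mathcal{L}_{p/q})$. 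A standard external-angle computation (the two external angles at the root of the $p/q$-bulb are $q$-periodic under doubling and subtend an arc of length $1/(2^q-1)$) gives $\mu_\Mand(\mathcal{L}_{p/q})=1/(2^q-1)$; accounting for the factor $2$ in $D_{p/q}$ when $q\geq 3$ produces $\lim_n 2^{-n}D_{p/q}(n)=c_{p/q}$, whence $\nu(\hat T_\bif,\infty_{p/q})\geq c_{p/q}$.

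For the matching upper bound, $\hat T_\bif$ does not charge $\mathbb{L}_\infty$ by Theorem \ref{teoExt}(i), so $\hat T_\bif\wedge[\mathbb{L}_\infty]$ is a well-defined positive measure of total mass $\|\hat T_\bif\|_{\p^2}\cdot\deg\mathbb{L}_\infty=1/2$. Theorem \ref{semicont}(2) applied to $\hat T_\bif$ and $[\mathbb{L}_\infty]$ gives $(\hat T_\bif\wedge[\mathbb{L}_\infty])(\{a\})\geq \nu(\hat T_\bif,a)$ for every $a\in\mathbb{L}_\infty$. Summing over the countable family $\{\infty_{p/q}\}$ and using the totient generating function $\sum_{q\geq 1}\phi(q)x^q/(1-x^q)=x/(1-x)^2$ at $x=1/2$, which yields $\sum_{q\geq 1}\phi(q)/(2^q-1)=2$, a direct computation produces $\sum_{1\leq p\leq q/2,\,(p,q)=1} c_{p/q}=1/2$. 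The resulting chain
\[
\tfrac12 \;=\; \|\hat T_\bif\wedge[\mathbb{L}_\infty]\| \;\geq\; \sum_{p/q}(\hat T_\bif\wedge[\mathbb{L}_\infty])(\{\infty_{p/q}\}) \;\geq\; \sum_{p/q}\nu(\hat T_\bif,\infty_{p/q}) \;\geq\; \sum_{p/q} c_{p/q} \;=\; \tfrac12
\]
forces every inequality to be an equality, simultaneously pinning down the prescribed Lelong numbers and the atomic formula for $\hat T_\bif\wedge[\mathbb{L}_\infty]$.

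The vanishing of Lelong numbers away from $\{\infty_{p/q}\}$ follows at once: on $\C^2$ the current $\hat T_\bif=T_\bif$ admits the continuous global potential $L$, so all Lelong numbers vanish there; for $a\in\mathbb{L}_\infty\setminus\{\infty_{p/q}\}$ the previous step gives $(\hat T_\bif\wedge[\mathbb{L}_\infty])(\{a\})=0$, so $\nu(\hat T_\bif,a)=0$ by Theorem \ref{semicont}(2) once more. I expect the deepest ingredient to be the fact, packaged in Proposition \ref{propEps}, that each branch of $\Per_n(0)$ at $\infty_{p/q}$ is smooth and transverse to $\mathbb{L}_\infty$, so that Epstein's intersection multiplicity $D_{p/q}(n)$ really is the geometric Lelong number; the remainder of the argument is careful bookkeeping built on top of this, with the totient identity doing all the numerical work.
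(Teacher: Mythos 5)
Your argument is correct and follows essentially the same route as the paper: lower bounds on $\nu(\hat T_\bif,\infty_{p/q})$ via semicontinuity of Lelong numbers applied to $2^{-n}[\Per_n(0)]$ together with Epstein's identification of the multiplicities $D_{p/q}(n)$ and the limb-measure value $\mu_\Mand(\mathcal{L}_{p/q})=1/(2^q-1)$, then a matching mass count for $\hat T_\bif\wedge[\mathbb{L}_\infty]$ using B\'ezout and the totient identity $\sum_q\phi(q)/(2^q-1)=2$ to force equality everywhere. The only point you gloss over is the well-definedness of $\hat T_\bif\wedge[\mathbb{L}_\infty]$, which requires more than ``$\hat T_\bif$ does not charge $\mathbb{L}_\infty$'' (the paper's Lemma \ref{lmintersect} uses that the potential is continuous off the compact set $\mathbb{L}_{\infty,\bif}$ together with Demailly's criterion), but this is a minor omission in an otherwise identical proof.
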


The proof requires the two following lemmas. The first one is a consequence of Theorem \ref{tmextension} and the second one relies on a simple computation. They will be proved at the end of the section.

\begin{lm}
The measure $\hat T_\bif\wedge[\mathbb{L}_\infty]$ is a well-defined positive measure on $\p^2$ of mass $1/2$.
\label{lmintersect}
\end{lm}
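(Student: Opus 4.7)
The plan is to verify, via the Bedford--Taylor--Demailly theory of intersection of positive currents, that $\hat T_\bif\wedge[\mathbb{L}_\infty]$ is well-defined, and then to compute its total mass from the cohomology classes of the two factors.

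For the first step, I would exploit item $(3)$ of Theorem \ref{tmextension}, which asserts that $\supp(\hat T_\bif)\cap\mathbb{L}_\infty=\mathbb{L}_{\infty,\bif}$; in particular this intersection is a proper closed subset of $\mathbb{L}_\infty$. Since $H^{1,1}(\p^2,\R)=\R[\omega]$, the $\partial\bar\partial$-lemma allows us to write $\hat T_\bif=(1/2)\omega+dd^c g$ globally on $\p^2$, for some quasi-psh function $g$. Choosing $\zeta\in\mathbb{L}_\infty\setminus\mathbb{L}_{\infty,\bif}$ and a small neighborhood $U$ of $\zeta$, the current $\hat T_\bif$ vanishes on $U$, so $g$ is pluriharmonic, hence bounded, near $\zeta$. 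Consequently $g|_{\mathbb{L}_\infty}$ is not identically $-\infty$ on $\mathbb{L}_\infty$, and therefore belongs to $L^1_{\mathrm{loc}}(\mathbb{L}_\infty)$. The Bedford--Taylor--Demailly construction then produces a well-defined closed positive $(2,2)$-current $\hat T_\bif\wedge[\mathbb{L}_\infty]$, which, $\p^2$ being a surface, is a positive Radon measure.

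For the mass computation, I would argue cohomologically. By Theorem \ref{tmextension}, the total mass of $\hat T_\bif$ equals $1/2$, so its class in $H^{1,1}(\p^2,\R)$ is $(1/2)[\omega]$; the class of $[\mathbb{L}_\infty]$ is $[\omega]$, and the self-intersection satisfies $\int_{\p^2}\omega\wedge\omega=1$. Writing $[\mathbb{L}_\infty]-\omega=dd^c\phi$ for a globally defined quasi-psh function $\phi$ on $\p^2$ (for instance, a logarithmic defining function for $\mathbb{L}_\infty$), an integration by parts on the compact manifold $\p^2$ using that $\hat T_\bif$ is closed gives $\int_{\p^2}\hat T_\bif\wedge dd^c\phi=0$, hence
\begin{eqnarray*}
\|\hat T_\bif\wedge[\mathbb{L}_\infty]\|_{\p^2} = \int_{\p^2}\hat T_\bif\wedge[\mathbb{L}_\infty] = \int_{\p^2}\hat T_\bif\wedge\omega = \|\hat T_\bif\|_{\p^2} = \frac{1}{2}.
\end{eqnarray*}

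The only delicate point is to justify the Stokes-type vanishing $\int\hat T_\bif\wedge dd^c\phi=0$ in the singular setting; this is legitimate precisely because the quasi-potential $g$ of $\hat T_\bif$ restricts to an $L^1_{\mathrm{loc}}$ function on $\mathbb{L}_\infty$, as established in the first step, so a standard Bedford--Taylor regularization $g\ast\chi_\varepsilon$ passes to the limit. Equivalently, one may invoke the general principle that on a compact K\"ahler manifold the total mass of a well-defined wedge product of closed positive currents depends only on the cohomology classes of the factors. This is the main technical obstacle, but it is a completely standard part of the theory.
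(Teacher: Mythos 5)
Your proof is correct, but the well-definedness step is handled by a different (though closely related) mechanism than in the paper. The paper works in the affine chart $\p^2\setminus\Per_1(0)$, in which $\mathbb{L}_{\infty,\bif}$ is compact (it avoids the point $\Per_1(0)\cap\mathbb{L}_\infty$); it takes a local potential $u$ of $\hat T_\bif$ there, observes via item $(3)$ of Theorem \ref{teoExt} that the unbounded locus of $u$ is contained in the compact set $\mathbb{L}_{\infty,\bif}$, covers by pseudoconvex balls whose boundaries avoid $\mathbb{L}_{\infty,\bif}$, and invokes Demailly's criterion on unbounded loci (\cite{Demailly}, Proposition 4.1 p.~150). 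You instead use a global $\frac{1}{2}\omega$-plurisubharmonic potential $g$ together with the $L^1_{\mathrm{loc}}\big(\|[\mathbb{L}_\infty]\|\big)$ criterion: since $\supp(\hat T_\bif)\cap\mathbb{L}_\infty=\mathbb{L}_{\infty,\bif}$ is a proper closed subset of $\mathbb{L}_\infty$, the restriction $g|_{\mathbb{L}_\infty}$ is finite somewhere, hence quasi-subharmonic and integrable on the line. Both arguments rest on exactly the same input, namely item $(3)$ of Theorem \ref{teoExt}; yours is a little more economical for an intersection with a smooth curve, while the paper's unbounded-locus formulation is the one that is reused verbatim to define $\hat T_\bif\wedge\hat T_\bif$ in Proposition \ref{propComM}. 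The mass computations are identical in substance: your cohomological argument is precisely what the paper compresses into the word ``B\'ezout''. One small slip to fix: on a neighborhood $U$ of $\zeta\in\mathbb{L}_\infty\setminus\mathbb{L}_{\infty,\bif}$ one has $dd^cg=-\frac{1}{2}\omega\neq0$, so $g$ is not pluriharmonic there; it is, however, smooth modulo a pluriharmonic function, hence locally bounded near $\zeta$, which is all your argument actually needs.
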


\begin{lm}
Let $\mu\pe\displaystyle\frac{1}{6}\delta_{\infty_{1/2}}+\sum_{1\leq p\leq q/2 \atop p\wedge q=1, \ q\geq3}\displaystyle\frac{1}{2^q-1}\delta_{\infty_{p/q}}$, then $\mu$ has mass $1/2$.
\label{lmmuinfty}
\end{lm}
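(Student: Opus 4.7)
The plan is to reduce the claim to the classical Lambert series identity
\[
\sum_{n\geq 1}\frac{\varphi(n)\,x^n}{1-x^n}=\frac{x}{(1-x)^2},\qquad |x|<1,
\]
evaluated at $x=1/2$. First I would count the Dirac masses of $\mu$ in each layer $q\geq 3$. Since for such $q$ the map $p\mapsto q-p$ is a fixed-point-free involution on $\{1,\dots,q-1\}\cap\{p:p\wedge q=1\}$ (the possible fixed point $p=q/2$ always satisfies $(q/2)\wedge q=q/2\geq 2$), one has
\[
\#\{p:1\leq p\leq q/2,\ p\wedge q=1\}=\frac{\varphi(q)}{2}.
\]
Hence
\[
\mu(\p^2)=\frac{1}{6}+\frac{1}{2}\sum_{q\geq 3}\frac{\varphi(q)}{2^q-1}.
\]

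Next I would establish the Lambert identity: expanding $x^n/(1-x^n)=\sum_{k\geq 1}x^{kn}$ and exchanging the two sums gives
\[
\sum_{n\geq 1}\frac{\varphi(n)\,x^n}{1-x^n}=\sum_{m\geq 1}\Bigl(\sum_{d\mid m}\varphi(d)\Bigr)x^m=\sum_{m\geq 1}m\,x^m=\frac{x}{(1-x)^2},
\]
using the standard arithmetic identity $\sum_{d\mid m}\varphi(d)=m$. Setting $x=1/2$ yields
\[
\sum_{n\geq 1}\frac{\varphi(n)}{2^n-1}=2.
\]

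The conclusion is then a short arithmetic check. Separating out the $n=1$ and $n=2$ contributions (namely $1$ and $1/3$ respectively) leaves $\sum_{q\geq 3}\varphi(q)/(2^q-1)=2/3$, so
\[
\mu(\p^2)=\frac{1}{6}+\frac{1}{2}\cdot\frac{2}{3}=\frac{1}{2},
\]
as desired. The only substantive step is the Lambert series identity; everything else is a routine counting argument and an arithmetic simplification, so I do not anticipate any genuine obstacle beyond recalling (or rederiving) that classical identity.
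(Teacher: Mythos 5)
Your proof is correct and follows essentially the same route as the paper: counting the atoms in each layer $q$ via the involution $p\mapsto q-p$ to get $\varphi(q)/2$, and then evaluating the Lambert series $\sum_{n\geq1}\varphi(n)x^n/(1-x^n)=x/(1-x)^2$ at $x=1/2$ (the paper cites Hardy--Wright for this identity, whereas you rederive it from $\sum_{d\mid m}\varphi(d)=m$). The only cosmetic difference is that the paper absorbs the $1/6$ term as $\frac{1}{2}\cdot\frac{\varphi(2)}{2^2-1}$ into a single sum over $q\geq2$, while you subtract the $q=1,2$ terms at the end; both computations agree.
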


\begin{proof}
First observe that $\nu(\hat T_\bif,a)=0$ when $a\notin \mathbb{L}_{\infty,\bif}$, since then $\hat T_\bif$ has a continuous potential in a neighborhood of $a$. Let us now pick  $1\leq p\leq q/2$ such that $p\wedge q=1$. By item $(3)$ of Theorem \ref{tmextension} and Theorem \ref{semicont}, we have
\begin{center}
$\nu(\hat T_\bif,\infty_{p/q})\geq\displaystyle\limsup_{n\rightarrow\infty}\nu(2^{-n}[\Per_n(0)],\infty_{p/q})$.
\end{center}
Proposition \ref{propEpstein} states that $\nu([\Per_n(0)],\infty_{p/q})=D_{p/q}(n)$ is the number of $n$-hyperbolic components of the union $\mathcal{L}_{p/q}\cup\mathcal{L}_{-p/q}$ of limbs of the Mandelbrot set. Thus, by Theorem \ref{tmequidistribution},
\begin{center}
$2^{-n}\nu([\Per_n(0)],\infty_{p/q})\longrightarrow \frac{1}{2}\mu_\Mand(\mathcal{L}_{p/q}\cup\mathcal{L}_{-p/q})$.
\end{center}
On the other hand, Bullett and Sentenac have proved that $\mu_\Mand(\mathcal{L}_{p/q})=\mu_\Mand(\mathcal{L}_{-p/q})=\frac{1}{2^q-1}$ (see \cite{bullett}). This gives $\nu(\hat T_\bif,\infty_{p/q})\geq \frac{1}{2^q-1}$ if $p/q\neq1/2$ and $\nu(\hat T_\bif,\infty_{1/2})\geq \frac{1}{2(2^2-1)}=\frac{1}{6}$.
\par Let $a\in\mathbb{L}_\infty$. By Theorem \ref{semicont} , we have $\nu(\hat T_\bif\wedge [\mathbb{L}_\infty],a)\geq\nu(\hat T_\bif,a)\nu([\mathbb{L}_\infty],a)$. As $\mathbb{L}_\infty$ is a line, we get $\nu(\hat T_\bif\wedge [\mathbb{L}_\infty],a)\geq\nu(\hat T_\bif,a)$. For $a=\infty_{p/q}$, we thus have $\nu(\hat T_\bif\wedge [\mathbb{L}_\infty],\infty_{p/q})\geq \frac{1}{2^q-1}$ if $q\geq3$ and $\nu(\hat T_\bif\wedge [\mathbb{L}_\infty],\infty_{1/2})\geq\frac{1}{6}$, which we restate as
\begin{center}
$\hat T_\bif\wedge [\mathbb{L}_\infty]\geq\mu$.
\end{center}
Since, according to Lemmas \ref{lmintersect} and \ref{lmmuinfty}, the  positive measures $\hat T_\bif\wedge[\mathbb{L}_\infty]$ and $\mu$ have the same mass, this yields $\hat T_\bif\wedge[\mathbb{L}_\infty]=\mu$. We thus get point $(2)$ and $\nu(\hat T_\bif,\infty_{p/q})=\frac{1}{2^q-1}$ for $q\geq3$ and $\nu(\hat T_\bif,\infty_{1/2})=\frac{1}{6}$.
\par From $\hat T_\bif\wedge [\mathbb{L}_\infty]\geq\nu(\hat T_\bif,a)\delta_a$ for any $a\in\mathbb{L}_\infty$ and $\hat T_\bif\wedge [\mathbb{L}_\infty]=\mu$ we get $\nu(\hat T_\bif,a)=0$ for $a\neq\infty_{p/q}$.
\end{proof}

\par \textit{Proof of Lemma \ref{lmintersect}. } Let us first remark that, by Theorem \ref{tmextension}, $\supp(\hat T_\bif)\cap\mathbb{L}_\infty=\mathbb{L}_{\infty,\bif}$ and let us decompose $\p^2$ as the disjoint union of the line $\Per_1(0)$ and (a copy of) $\C^2$. Let us stress that with these notations one has $\mathbb{L}_\infty\setminus\Per_1(0)\cap\mathbb{L}_\infty\subset \C^2$.
By Proposition \ref{propMilnor}, the set $\mathbb{L}_{\infty,\bif}$ is compact in $\C^2$. By definition, the current $\hat T_\bif$ has a potential $u$ in $\C^2$ which is continuous in $\C^2\setminus\mathbb{L}_\infty$. By item $(3)$ of Theorem \ref{tmextension}, this potential is actually continuous in $\C^2\setminus\mathbb{L}_{\infty,\bif}$. Let $B_1$ be a ball of $\C^2$ containing $\mathbb{L}_{\infty,\bif}$ and $(B_i)_{i\geq2}$ be a covering of $\C^2\setminus B_1$ by balls such that $\overline{B_i}\cap \mathbb{L}_{\infty,\bif}=\emptyset$ for all $i\geq2$.
\par As $\{u\text{ is unbounded}\}\subset \mathbb{L}_{\infty,\bif}$ and $\mathbb{L}_{\infty,\bif}\cap \partial B_i=\emptyset$ and $B_i$ is pseudoconvex for any $i\geq1$, a result of Demailly asserts that $\hat T_\bif|_{\C^2}\wedge[\mathbb{L}_\infty]=dd^cu\wedge [\mathbb{L}_\infty]$ is well-defined (see \cite{Demailly} Proposition 4.1 page 150). Since, by Proposition \ref{propMilnor} and Theorem \ref{tmnombrelelong}, $\mathbb{L}_\infty$ and $\supp(\hat T_\bif)$ don't intersect in a neighborhood of $\Per_1(0)$, the measure $\hat T_\bif\wedge[\mathbb{L}_\infty]$ is a well-defined positive measure on $\p^2$.
\par Finally, B\'ezout Theorem asserts that $\|\hat T_\bif\wedge[\mathbb{L}_\infty]\|_{\p^2}=\|\hat T_\bif\|_{\p^2}\cdot\|[\mathbb{L}_\infty]\|_{\p^2}=1/2$.\hfill$\Box$

~

\par \textit{Proof of Lemma \ref{lmmuinfty}. } Denote by $\phi(n)$ the Euler function. As the sets $\{p \text{ s.t. } 1\leq p\leq q/2,\ p\wedge q=1\}$ and $\{q-p \text{ s.t. } 1\leq p\leq q/2,\ p\wedge q=1\}$ have same cardinality, we get
\begin{eqnarray*}
\mu(\p^2) & = & \frac{1}{6}+\sum_{1\leq p\leq q/2,\atop p\wedge q=1,\ q\geq3}\frac{1}{2^q-1}=\frac{1}{2(2^2-1)}+\frac{1}{2}\sum_{1\leq p< q,\atop p\wedge q=1, \ q\geq3}\frac{1}{2^q-1}\\
& = & \frac{1}{2}\sum_{q\geq2}\left(\sum_{1\leq p< q,\atop p\wedge q=1}\frac{1}{2^q-1}\right)=\frac{1}{2}\sum_{q\geq2}\frac{\phi(q)}{2^q-1}.
\end{eqnarray*}
A classical result (see \cite{hardywright} Theorem 309 page 258) asserts that the series $\sum_{n\geq1}\phi(n)\frac{x^n}{1-x^n}$ locally uniformly converges on $\D$ and that its sum is $\frac{x}{(1-x)^2}$. Therefore,
\begin{eqnarray*}
\mu(\p^2)=\frac{1}{2}\left(\sum_{q\geq1}\frac{\phi(q)}{2^q-1}-\phi(1)\right)=\frac{1}{2}\left(\sum_{q\geq1}\frac{\phi(q)(\frac{1}{2})^q}{1-(\frac{1}{2})^q}-\phi(1)\right)=\frac{1}{2}.
\end{eqnarray*}
\hfill$\Box$

\section{Behavior of the bifurcation measure near infinity.}

\par One often compares the moduli space $\mathcal{M}_2$ of quadratic rational maps with the moduli space $\mathcal{P}_3$ of cubic polynomials. In this section, we enlight some important differences between these two spaces. We first show that the bifurcation measure is not compactly supported in $\mathcal{M}_2$.

\begin{prop}\label{propSupM}
The cluster set of the support of $\mu_\bif$ in $\p^2$ is precisely $\mathbb{L}_{\infty,\bif}$.
\label{propunbouned}
\end{prop}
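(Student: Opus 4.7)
The plan is to prove the two inclusions separately. The inclusion $\overline{\supp(\mu_\bif)}\cap\mathbb{L}_\infty\subset\mathbb{L}_{\infty,\bif}$ follows immediately: since $T_\bif$ has a continuous potential $L$, we have $\supp(\mu_\bif)\subset\supp(T_\bif)$, and Theorem \ref{tmextension} (3) shows that the closure of $\supp(T_\bif)$ in $\p^2$ meets $\mathbb{L}_\infty$ exactly in $\mathbb{L}_{\infty,\bif}$.

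For the reverse inclusion, since the cluster set at infinity is closed in $\mathbb{L}_\infty$ and since $\{[1:2\cos\theta:0]:\theta\in(0,\pi)\}$ is dense in $\mathbb{L}_{\infty,\bif}$, it suffices to construct, for every $\theta\in(0,\pi)$, a sequence of Shishikura parameters converging in $\p^2$ to $\xi_\theta:=[1:2\cos\theta:0]$; by Theorem \ref{tmsuppmubif} such parameters lie in $\supp(\mu_\bif)$. The idea is to intersect two distinct $\Per_1$-lines whose points at infinity coalesce toward $\xi_\theta$. From Proposition \ref{propMilnor} (1), $\Per_1(w)$ is given by $(w^2+1)\sigma_1-w\sigma_2=w^3+2$ with point at infinity $[1:w+1/w:0]$, and one checks that two distinct lines $\Per_1(u),\Per_1(v)$ share their point at infinity precisely when $uv=1$. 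I therefore pick $u_n,v_n\in\partial\D$ with $u_n\to e^{i\theta}$, $v_n\to e^{-i\theta}$, $u_n\ne v_n$, $u_nv_n\ne 1$ and $u_nv_n\to 1$. Then $\Per_1(u_n)$ and $\Per_1(v_n)$ are distinct and non-parallel, hence meet at a unique point $\lambda_n\in\C^2$; the corresponding map carries two distinct fixed points of neutral multipliers $u_n$ and $v_n$, so $\lambda_n\in\mathcal{S}\subset\supp(\mu_\bif)$.

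The last step is to verify that $\lambda_n\to\xi_\theta$ in $\p^2$. Solving the $2\times 2$ linear system by Cramer's rule, the determinant factors as $(u_n-v_n)(1-u_nv_n)$; for $\theta\in(0,\pi)$ one has $2\cos\theta\ne\pm 2$, which rules out any $0/0$ collision in the numerators, so both coordinates $\sigma_1,\sigma_2$ of $\lambda_n$ blow up and a short simplification yields $\sigma_2/\sigma_1\to e^{i\theta}+e^{-i\theta}=2\cos\theta$, i.e.\ $\lambda_n\to\xi_\theta$ as desired. The main difficulty in this plan is not the final algebraic computation, but identifying the correct geometric configuration: the family $(u,v)\mapsto\Per_1(u)\cap\Per_1(v)$ on $\partial\D\times\partial\D$ simultaneously yields Shishikura parameters and controls their escape to infinity through the collision of directions of $\Per_1$-lines, a structure made visible precisely by Milnor's projective compactification.
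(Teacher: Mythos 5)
Your proof is correct and follows essentially the same route as the paper: both produce Shishikura parameters in $\supp(\mu_\bif)$ as intersection points $\Per_1(u)\cap\Per_1(v)$ with $u,v$ unimodular, distinct, and $uv\to1$, and both use Theorem \ref{tmextension} together with Theorem \ref{tmsuppmubif} for the two inclusions. The only divergence is the final convergence check: the paper fixes one of the two lines and invokes the holomorphic index formula to show the third fixed-point multiplier blows up, so that the intersection point escapes to infinity along that fixed line and must converge to its unique point at infinity, whereas you vary both lines and verify $\sigma_1\to\infty$ and $\sigma_2/\sigma_1\to 2\cos\theta$ directly via Cramer's rule --- both verifications are valid.
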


\begin{proof}
By Theorem \ref{tmextension}, it suffices to show that $\mathbb{L}_{\infty,\bif}$ is accumulated by points of $\supp(\mu_\bif)$. Recall that $\Per_1(e^{2i\pi\nu})\cap\mathbb{L}_\infty=\{[1:2\cos(2\pi\nu):0]\}$ for any $0\leq\nu\leq1$ (see Proposition \ref{propMilnor}). Let us fix $0<\theta_0<1$. For $\theta>0$ small enough, the lines $\Per_1(e^{2i\pi\theta_0})$ and $\Per_1(e^{2i\pi(\theta-\theta_0)})$ do not intersect on $\mathbb{L}_\infty$ and therefore,
\begin{center}
$\{\lambda(\theta)\}\pe\Per_1(e^{2i\pi\theta_0})\cap\Per_1(e^{2i\pi(\theta-\theta_0)})\subset\C^2$.
\end{center}
Since $\lambda(\theta)$ has two distinct neutral fixed points, it belongs to $\supp(\mu_\bif)$ (see Theorem \ref{tmsuppmubif}). It remains to check that $\lim_{\theta\rightarrow0}\lambda(\theta)=[1:2\cos(2\pi\theta_0):0]$. By the holomorphic index formula (see \cite{Milnor3}), the multiplier $\gamma(\theta)$ of the third fixed point of $\lambda(\theta)$ is given by
\begin{center}
$\gamma(\theta)=\displaystyle\frac{2-(e^{2i\pi\theta_0}+e^{2i\pi(\theta-\theta_0)})}{1-e^{2i\pi \theta}}$.
\end{center}
As $\lim_{\theta\rightarrow0}|\gamma(\theta)|=+\infty$, one has $\lim_{\theta\rightarrow0}\|\lambda(\theta)\|=+\infty$. The conclusion follows, since $\lambda(\theta)$ stays on $\Per_1(e^{2i\pi\theta_0})$ and $\Per_1(e^{2i\pi\theta_0})\cap\mathbb{L}_\infty=\{[1:2\cos(2\pi\theta_0):0]\}$.
\end{proof}

\par We would like to extend $\mu_\bif$ as a reasonable bifurcation measure on $\p^2$. To this aim, we compare $\mu_\bif$ with $(\hat T_\bif)^2$. We prove the following.

\begin{prop}\label{propComM}
There exists a positive measure $\mu_\infty$ supported by $\mathbb{L}_{\infty,\bif}$ such that
\begin{eqnarray*}
\hat T_\bif\wedge\hat T_\bif= \mu_\bif+\frac{1}{36}\delta_{\infty_{1/2}}+\sum_{1\leq p\leq q/2 \atop p\wedge q=1,q\geq3}\frac{1}{(2^q-1)^2}\delta_{\infty_{p/q}}+\mu_\infty.
\end{eqnarray*}
\end{prop}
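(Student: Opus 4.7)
The plan is to define $\hat T_\bif\wedge\hat T_\bif$ as a positive measure on $\p^2$, identify its restriction to $\mathcal{M}_2=\C^2$ with $\mu_\bif$, extract the expected atomic part at the cusps $\infty_{p/q}$ via Demailly's Lelong number inequality, and take $\mu_\infty$ to be the non-negative remainder.

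First I would check that $\hat T_\bif\wedge\hat T_\bif$ is well defined as a positive measure on $\p^2$. In any affine chart $U\subset\p^2$ one can write $\hat T_\bif|_U=dd^c\psi_U$ for a psh function $\psi_U$ which is locally bounded outside the line $\mathbb{L}_\infty$: indeed, in the chart $\C^2=\mathcal{M}_2$ such a potential differs from the continuous Lyapunov function $L$ by a smooth term, and in a neighborhood of any point of $\mathbb{L}_\infty\setminus\mathbb{L}_{\infty,\bif}$ the current $\hat T_\bif$ vanishes by Theorem~\ref{tmextension}(3), so $\psi_U$ is pluriharmonic there. Since $\mathbb{L}_\infty$ is an analytic subvariety of complex codimension $1$ in $\p^2$, the Bedford--Taylor / Demailly Monge-Amp\`ere operator then produces a well-defined positive measure $\hat T_\bif\wedge\hat T_\bif$ on $\p^2$, of total mass $(1/2)^2=1/4$ by B\'ezout.

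Next, because $L$ is continuous on $\mathcal{M}_2=\C^2$, the Bedford-Taylor definition gives
$$\left.\hat T_\bif\wedge\hat T_\bif\right|_{\C^2}=(dd^cL)^2=\mu_\bif,$$
while the inclusion $\supp(\hat T_\bif\wedge\hat T_\bif)\subset\supp(\hat T_\bif)$ combined with Theorem~\ref{tmextension}(3) forces the mass of $\hat T_\bif\wedge\hat T_\bif$ carried by $\mathbb{L}_\infty$ to be supported by $\mathbb{L}_{\infty,\bif}$. Hence one obtains a decomposition
$$\hat T_\bif\wedge\hat T_\bif=\mu_\bif+\nu_\infty,$$
where $\nu_\infty$ is a positive measure on $\p^2$ supported by $\mathbb{L}_{\infty,\bif}$.

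Finally, as the Lelong number of the positive measure $\hat T_\bif\wedge\hat T_\bif$ at a point is precisely its Dirac mass there, Theorem~\ref{semicont}(2) applied at each $\infty_{p/q}$, together with the values of $\nu(\hat T_\bif,\infty_{p/q})$ provided by Theorem~\ref{tmnombrelelong}, yields
$$\nu_\infty\bigl(\{\infty_{p/q}\}\bigr)\ge\nu(\hat T_\bif,\infty_{p/q})^2,$$
that is, at least $1/36$ at $\infty_{1/2}$ and at least $1/(2^q-1)^2$ at each $\infty_{p/q}$ with $q\ge3$. Defining $\mu_\infty$ as $\nu_\infty$ minus these Dirac contributions then produces a positive measure on $\mathbb{L}_{\infty,\bif}$ and the announced identity. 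The main obstacle is the first step: justifying rigorously that the Monge-Amp\`ere self-intersection of $\hat T_\bif$ makes sense as a positive measure on $\p^2$; once available, the remaining steps follow from the localization principle of Bedford-Taylor and from Demailly's Lelong number estimate.
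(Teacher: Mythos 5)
Your first half matches the paper's argument: the well-definedness of $\hat T_\bif\wedge\hat T_\bif$ is obtained exactly as in Lemma \ref{lmintersect} (the essential point being, as you note, that the potential is pluriharmonic near $\mathbb{L}_\infty\setminus\mathbb{L}_{\infty,\bif}$, so its unbounded locus is contained in the \emph{compact} set $\mathbb{L}_{\infty,\bif}$ inside the affine chart complementary to $\Per_1(0)$ --- be careful that ``locally bounded outside a codimension-one subvariety'' is not by itself sufficient for $(dd^c u)^2$, as $u=\log|z_1|$ in $\C^2$ shows); the identification with $\mu_\bif$ on $\C^2$ and the extraction of the atoms via Theorem \ref{semicont}(2) and Theorem \ref{tmnombrelelong} are then exactly the paper's steps.

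However, there is a genuine gap: you only prove that $\mu_\infty$ is \emph{non-negative}, whereas the proposition asserts it is a \emph{positive} (i.e.\ nonzero) measure, and the paper explicitly stresses this distinction just before its proof. The second half of the paper's argument is devoted to ruling out $\mu_\infty=0$: one writes $\mu_\bif=\lim_m\lim_n 2^{-(n+m)}[\Per_n(0)\cap\Per_m(0)]$ on $\C^2$, invokes the Kiwi--Rees upper bound on the number of $(n,m)$-hyperbolic components (together with a transversality statement identifying that count with $[\Per_n(0)\cap\Per_m(0)](\C^2)$) to get $\mu_\bif(\C^2)\leq\frac{5}{48}-\frac{1}{16}\sum_{q\geq2}\frac{\phi(q)}{(2^q-1)^2}$, and then compares with the B\'ezout identity $\|\hat T_\bif\wedge\hat T_\bif\|_{\p^2}=1/4$: if $\mu_\infty$ were zero, the mass count would force $\frac{25}{56}\leq\sum_{q\geq2}\frac{\phi(q)}{(2^q-1)^2}$, contradicting $\sum_{q\geq1}\frac{\phi(q)}{2^q-1}=2$. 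None of this appears in your proposal, and it cannot be recovered from Lelong-number estimates alone; you need the external combinatorial input from Kiwi--Rees to bound $\mu_\bif(\C^2)$ strictly away from the total mass left over after subtracting the atoms.
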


\par Les us stress that our previous results ensure the existence of $\mu_\infty$ as a non-negative measure. A recent result due to Kiwi and Rees concerning the number of $(n,m)$-hyperbolic components of $\mathcal{M}_2$ allows to see that $\mu_\infty$ is actually a positive measure.

\begin{proof}
Arguing exactly as in the proof of Lemma \ref{lmintersect}, one sees that $\hat T_\bif\wedge \hat T_\bif$ is a well-defined positive measure on $\p^2$. By definition, $\hat T_\bif\wedge\hat T_\bif$ and $\mu_\bif$ coincide on $\C^2$. To prove the existence of $\mu_\infty$, it thus only remains to justify that
\begin{eqnarray*}
\hat T_\bif\wedge\hat T_\bif\geq\mu_\bif+\frac{1}{36}\delta_{\infty_{1/2}}+\sum_{1\leq p\leq q/2 \atop p\wedge q=1,q\geq3}\frac{1}{(2^q-1)^2}\delta_{\infty_{p/q}}.
\end{eqnarray*}
By Theorem \ref{semicont} and Theorem \ref{tmnombrelelong}, we have
\begin{eqnarray*}
\nu(\hat T_\bif\wedge\hat T_\bif,\infty_{p/q})\geq\nu(\hat T_\bif,\infty_{p/q})^2=\frac{1}{(2^q-1)^2},\text{ when }q\geq3
\end{eqnarray*}
and $\nu(\hat T_\bif\wedge\hat T_\bif,\infty_{1/2})\geq1/36$. The existence of $\mu_\infty$ then follows, since
\begin{center}
$\big( \hat T_\bif\wedge\hat T_\bif\big)\big|_{\mathbb{L}_\infty}\geq\displaystyle\sum_{1\leq p\leq q/2 \atop p\wedge q=1}\nu(\hat T_\bif\wedge\hat T_\bif,\infty_{p/q})\delta_{\infty_{p/q}}$.
\end{center}

~

\par Let us now show that $\mu_\infty>0$. One easily deduce from the convergence of $2^{-n}[\Per_n(0)]$ towards $T_\bif$ in any family (see \cite{BB3}) that
\begin{center}
$\mu_\bif=\lim_m\lim_n2^{-(n+m)}[\Per_n(0)\cap\Per_m(0)]$
\end{center}
on $\C^2$. Thus $\mu_\bif(\C^2)\leq \limsup_m\limsup_n2^{-(n+m)}[\Per_n(0)\cap\Per_m(0)](\C^2)$. Kiwi and Rees proved recently that the number of $(n,m)$-hyperbolic components in $\C^2$ is at most 
\begin{center}
$\displaystyle \left( \frac{5}{48}2^n-\frac{1}{8}\sum_{q=2}^n\frac{\phi(q)\nu_q(n)}{2^q-1}\right)2^m+\mathcal{O}(2^m)$,
\end{center}
where $\nu_q(n)\sim2^{n-1}/(2^q-1)$ and $\nu_q(n)\geq 2^{n-1}/(2^q-1)-1/2$ (see \cite{kiwirees} Theorem $1.1$). A standard transversality statement asserts that this number actually coincides with $[\Per_n(0)\cap\Per_m(0)](\C^2)$ (see \cite{BB3} Theorem 5.2). Thus
\begin{center}
$\displaystyle\mu_\bif(\C^2)\leq\frac{5}{48}-\frac{1}{16}\sum_{q\geq2}\frac{\phi(q)}{(2^q-1)^2}$.
\end{center}
Let us now proceed by contradiction, assuming that $\mu_\infty=0$. Since $\hat T_\bif$ has mass $1/2$, B\'ezout Theorem gives $\|\hat T_\bif\wedge\hat T_\bif\|_{\p^2}=1/4$. Therefore,
\begin{center}
$\displaystyle\frac{1}{4}=\|\mu_\bif\|+\frac{1}{36}+\sum_{1\leq p\leq q/2 \atop p\wedge q=1,q\geq3}\frac{1}{(2^q-1)^2}=\|\mu_\bif\|+\frac{1}{2}\sum_{q\geq2}\frac{\phi(q)}{(2^q-1)^2}-\frac{1}{36}$,
\end{center}
which yields $\frac{25}{56}\leq\sum_{q\geq2}\frac{\phi(q)}{(2^q-1)^2}$. We then have
\begin{eqnarray*}
\frac{25}{56}\leq\sum_{q\geq2}\frac{\phi(q)}{(2^q-1)^2}\leq \frac{\phi(2)}{6}+\frac{\phi(3)}{28}+\frac{1}{8}\sum_{q\geq4}\frac{\phi(q)}{2^q-1}
\end{eqnarray*}
which is impossible, since $\sum_{q\geq1}\frac{\phi(q)}{2^q-1}=2$ (see proof of Lemma \ref{lmmass}), $\phi(2)=1$ and $\phi(3)=2$.
\end{proof}

\begin{rem}
To underline the contrast with the moduli space of cubic polynomials $\mathcal{P}_3$, we recall that the bifurcation measure is compactly supported and coincides with $\hat T_\bif^2$
in $\mathcal{P}_3$.
\end{rem}

\bibliographystyle{short}
\bibliography{biblio}
\bigskip

{\footnotesize 

Fran\c cois Berteloot, Universit\' e Paul Sabatier MIG. 
Institut de Math\'ematiques de Toulouse. 
31062 Toulouse
Cedex 9, France.
{\em Email: berteloo@picard.ups-tlse.fr}}\\

{\footnotesize 

Thomas Gauthier, Universit\'e de Picardie Jules Verne, LAMFA UMR-CNRS 7352, 80039 Amiens Cedex 1, France, and, Stony Brook University, Institute for Mathematical Sciences, Stony Brook, NY 11794,  USA.
{\em Email: thomas.gauthier@u-picardie.fr}}

\end{document}